\theoremstyle{plain}
\newtheorem{thm}{Theorem}[section]
\theoremstyle{plain}
\newtheorem{lem}[thm]{Lemma}
\theoremstyle{definition}
\newtheorem{defi}{Definition}[section]
\newtheorem{rem}{Remark}[section]
\newtheorem*{maintheorem*}{Main Theorem}
\newenvironment{Assumptions}
{
\setcounter{enumi}{0}

\begin{enumerate}}
{\end{enumerate} }
\newenvironment{Assumptions2}
{
\setcounter{enumi}{0}

\begin{enumerate}}
{\end{enumerate} }
\newcommand{\R}{\ensuremath{\mathbb{R}}}
\newcommand{\goto}{\ensuremath{\rightarrow}}
\newcommand{\eps}{\ensuremath{\varepsilon}}
\numberwithin{equation}{section} \allowdisplaybreaks
\title[Large Deviations Principle]
{Nonlinear stochastic Laplace equation: Large Deviations and Measure Concentration}
\date{}
\author[R.Kavin]{Kavin R}
\address[Kavin R] {\newline 
Department of Mathematics,
Indian Institute of Technology Delhi,
Hauz Khas, New Delhi, 110016, India.}
\email[] {maz198757@iitd.ac.in}
\author[A. K. Majee]{Ananta K. Majee}
\address[Ananta K. Majee]{\newline
Department of Mathematics,
Indian Institute of Technology Delhi,
Hauz Khas, New Delhi, 110016, India. }
\email[]{majee@maths.iitd.ac.in}
\keywords{Large deviation principle; weak convergence method; Transportation inequality; Girsanov transformation; evolutionary $p$-Laplace equation.}
\thanks{}
\numberwithin{equation}{section} \allowdisplaybreaks
\begin{document}
\begin{abstract}
    In this paper, a large deviation 
principle for the strong solution of the $p$-Laplace equation on $\R^d$ driven by small multiplicative Brownian noise is established. The weak convergence approach and the localized time increment estimate plays a crucial role to establish the large deviation principle. Moreover, based on  the Girsanov transformation and the standard $L^2$-uniqueness approach, the quadratic transportation cost information inequality is proved for the strong solution to the underlying problem which then implies the measure concentration phenomenon. 

\end{abstract}
\maketitle
\section {Introduction}
Let $(\Omega, \mathbb{P}, \mathcal{F}, \{ \mathcal{F}_t\}_{t \geq 0})$
be a given filtered probability space satisfying the usual hypothesis. We are interested in the theory of large deviation principle and measure concentration  for the strong solution of the following evolutionary $p$-Laplace equation on the unbounded domain $\R^d$ perturbed by the multiplicative cylindrical Wiener noise:
\begin{equation}\label{eq:spde}
      \begin{aligned}
            du(t,x)  - {\rm div}_x ( |\nabla u|^{p-2}\nabla u) \, dt &= \sigma(u)\,dW(t),~~~(t,x)\in \Pi_T, \\
            u(0,x)=u_0(x),~~x\in \R^d\,,
        \end{aligned}
\end{equation}
where $1<p<\infty,~\Pi_T: = (0,T) \times \R^d$ with $T > 0$ fixed. In \eqref{eq:spde}, $\sigma$ is a given noise coefficient, 
$W$ is a cylindrical Wiener process, with respect to the filtered probability space $(\Omega, \mathbb{P}, \mathcal{F}, \{ \mathcal{F}_t\}_{t \geq 0})$, defined on a given separable Hilbert space $\mathcal{H}$. 
\vspace{0.2cm}

It is well-known that the infiltration of  a fluid, in the presence of  heterogeneous medium or turbulence, is govern by the continuity equation and the generalized $p$-power type version of the Darcy law. Discarding the lower order terms and renormalizing the various physical constants, one model the flow of a fluid through heterogeneous medium or turbulence via the PDEs
\begin{align}
\partial_t u - {\rm div} ( |\nabla u|^{p-2}\nabla u) =f, \label{eq:pde-deterministic}
\end{align}
where $u=u(t,x)$ is the volumetric moisture content, $1\le p<\infty$ varies with the properties of the flow, and $f(t,x)$ represents the external force present in the system; see \cite{Diaz-1994} and references therein.  For fixed $p\in (1,\infty)$, the diffusion operator $\Delta_p u:= {\rm div} ( |\nabla u|^{p-2}\nabla u)$ is known as 
the $p$-Laplace operator. For $p=2$, it is well-known Laplace operator. 
\vspace{0.1cm}

Due to the complexity of the physical phenomena and the multiscale interactions, systems may have some random influences.  It is quite natural to add a stochastic forcing in the governing equation \eqref{eq:pde-deterministic}.  Hence, equation of type \eqref{eq:spde} could be viewed as a stochastic perturbation of an evolutionary $p$-Laplace equation with nonlinear sources--- which appears in many applications in mechanics, physics and biology (non-Newtonian fluids, gas flow in porus media, spread of biological populations etc; see e.g., \cite{aronson-83,caffarelli-87, debenedetto-93,ladyzenskaya-67,wu-2001}). Due to its wide range of applications in physical contexts, it is important to study the well-posedness theory for equation \eqref{eq:spde}.

 \vspace{0.1cm}

In view of non-linearity of diffusion term present in the equation \eqref{eq:spde}, a semigroup representation \cite{prato} of  mild solutions of \eqref{eq:spde} is not possible. In the case of bounded domain, equation \eqref{eq:spde} becomes a monotone nonlinear SPDE driven by cylindrical Wiener noise, and one could apply the method of monotonicity, first developed in \cite{lions-69} for the deterministic equation, on an appropriate choice of the Gelfand triplet $({\tt V}, {\tt H},{\tt V}^*)$~(i.e., ${\tt V}$ be a separable Banach space with its dual ${\tt V}^*$ such that $V \hookrightarrow {\tt H}$ is continuous and dense and ${\tt H}$ is a Hilbert space with the identification ${\tt H}={\tt H}^*$) to prove its well-posedness; see \cite{barbu-2010,bensoussan-72,krylov-81,rockner-2015,pard}.
We mention the works of \cite{majee-20,guy-19}, where the authors studied the SPDE \eqref{eq:spde}, in a bounded domain, with additional drift term 
${\rm div}_x (F(u))$ for some Lipschitz continuous function $F$. Due to the presence of nonlinear perturbation ${\rm div}_x (F(u))$ of the $p$-Laplace operator~($p>2$), it is not feasible to use the results of monotone or locally monotone SPDEs. However, in \cite{guy-19}, they used the technique of semi-implicit time discretization and the Skorokhod and Prokhorov theorems to establish the existence of a martingale solution.  Moreover, employing the Gy$\Ddot{o}$ngy characterization of the convergence in probability and pathwise uniqueness (which was established by $L^1$- contraction method) of weak solutions, the authors established the well-posedness of the strong solution of the underlying problem.  In \cite{majee-20}, the author used pseudo-monotonicity methods, Aldous
tightness criterion, and the Jakubowski-Skorokhod theorem \cite{jakubowski-98} on a non-metric space to prove the
existence of a martingale solution of the evolutionary $p$-Laplace equation driven by L\'{e}vy noise.  We also refer to \cite{majee-24,guy-21} where the well-posedness theory (in a bounded domain) was established for the solution of the PDE governed by an evolutionary $p$-type growth operator with nonlinear random sources. However, due to the absence of Poincare's inequality and certain Sobolev space embedding (for example $W^{1,p}(\R^d)$ is not continuously embedded in $L^2(\R^d)$ for $p> \max\{ d,2\}$ and $1<p < \frac{2d}{d+2}$; see \cite{brezis-2011}), the coercivity condition of the $p$-Laplace operator $-\Delta_p$ in the whole domain $\R^d$ does not hold true. Hence, one cannot use a standard well-posedness theorems in the classical functional setting for all values of $1<p<\infty$. In \cite{schmitz-23}, the authors established the well-posedness result for \eqref{eq:spde} in the framework of Gelfand triple 
$\mathcal{Y} \hookrightarrow L^2(\R^d) \hookrightarrow \mathcal{Y}^\prime$~(see the notation in Section \ref{sec:technical}), which is independent of Sobolev space embedding and space dimension. They have shown that the classical well-posedness theory for monotone operators as in \cite{barbu-2010} as well as the general framework as in \cite{ren-2007} is not applicable for the SPDE \eqref{eq:spde} in the framework of  Gelfand triple $\mathcal{Y} \hookrightarrow L^2(\R^d) \hookrightarrow \mathcal{Y}^\prime$. 
The authors first established the wellposedness result for \eqref{eq:spde} with additive noise based on the time discretization technique and the monotonicity method, and then used a fixed-point argument to prove the well-posedness result for multiplicative noise. 

\vspace{0.2cm}

It is also crucial to analyze and quantify the probability of rare events in various fields, allowing for an understanding of the limiting behaviour of certain probability models. In particular, one may ask the following natural question: when the stochastic perturbation is significantly small, what is the asymptotic relationship between the solution of \eqref{eq:spde} and the corresponding deterministic equation?  In other words, one needs to study the small noise large deviation principle (LDP in short) for the solution of \eqref{eq:spde}. In mechanics, the LDP is used to analyze the behavior of rate fluctuations, while in dynamics, it helps to quantify certain quantities such as energy.
%One of the important topics in probability theory is to study/analyze the theory of large deviation principle~(LDP in short) which mainly investigate the asymptotic property of remote tails of a family of probability distributions.
 The thory of LDP plays an important role in various fields such as thermodynamics, information theory, engineering and statistics; see e.g., \cite{dembo, Stroock-1989, Ellis-1985,kurtz,freidlin, stroock,varadhan-84,varadha1} and reference therein. During the last decade there has been growing interest in the study of LDP for stochastic partial differential equations~(SPDEs) and new results are emerging faster than ever before within different frameworks in the literature. We refer to see  e.g.,\cite{chow-92,wang-2006, rock,wang-2012} where the authors have employed the approximation techniques, the contraction principle and the exponential tightness estimations to get the LDP. However, for many complex models, it is  a daunting task to verify some exponential estimates and the tightness property. In recent years, a weak convergence approach \cite{budhi,dupuis}, based on certain variational representation formulas, for the study of the small noise large deviation principle has received a considerable amount of attention. The main
advantage of the variational approach is that, instead of proving the quite complicated exponential tightness estimates, one needs only the fundamental qualitative understanding of the well-posedness and  the stability results of the underlying problem. Based on weak convergence method, a number of authors have contributed since then, and we mention only few see e.g., 
\cite{dong2,dong1, liu,  MSZ, ren} and references therein. In \cite{ren}, Ren and Zhang have studied Freidlin-Wentzell's large deviation for stochastic evolution equation in the evolution triple. The LDP for stochastic evolution equation involving strongly monotone drift  was studied by Liu in \cite{liu}. The study of LDP was carried out for 3D stochastic primitive equation by Dong, Zhai and Zhang in \cite{dong1}. In \cite{millet-2010}, the authors have studied the LDP for the
Stochastic $2D$ hydrodynamical type systems. The LDP theory for stochastic scalar conservation laws was stidied by Dong et al. in \cite{dong2, dong3} and by Behera et al. in \cite{behera-2024} in the kinetic solution framework. By introducing a different sufficient conditions, the authors in \cite{MSZ} have established the LDP for obstacle problems of quasi-linear SPDEs.
The LDP was studied for Multiscale locally monotone SPDEs by Hong et al. in \cite{hong-2021}, Stochastic geometric wave equation and Stochastic two-dimensional nematic liquid crystal model by Brze\'{z}niak et al. in \cite{goldys-2022, rana-2020}, Two-time-scale stochastic convective Brinkman-Forchheimer equations by Mohan in \cite{manil-2023}, and Stochastic shell model of turbulence by Manna et al. in \cite{manna-2009} via weak convergence method.
Very recently, the authors in \cite{kavin-majee-24} established large deviation principle for the strong solution of an evolutionary nonlinear perturbation ${\rm div}_x F(u)$ of  $p$-Laplace equation~($p>2$) driven by small multiplicative one dimensional Brownian noise in bounded domain. By employing semi-discrete time discretization together with a-priori estimation on some appropriate fractional Sobolev space, the authors established  the LDP theory via the weak convergence method. We also mention the work of \cite{majee-24}, where the author established the LDP for a doubly nonlinear PDE driven by small multiplicative Brownian noise in a bounded domain based on motononicity argument and the weak convergence approach. 
 \vspace{0.2cm}
 
 Another important topics of research is to study the concentration of measure phenomenon. For any metric space  $({\tt X},{\tt d})$ equipped with the Borel $\sigma$-field $\mathcal{B}({\tt X})$, define the set
$$ A_r:=\big\{ x\in {\tt X}:~{\rm dist}(x, A)\le r\big\},~~r>0,~~A\in \mathcal{B}({\tt X}).$$
For any Borel probability measure  $\mathbb{Q}$ on ${\tt X}$, consider 
$$ \alpha(r):= \sup\,\big\{ \mathbb{Q}({\tt X}\setminus A_r):~~A\in \mathcal{B}({\tt X}),~\mathbb{Q}(A)\ge \frac{1}{2}\big\}\,.$$
The {\it concentration of measure phenomenon} is the property that 
$$ \alpha(r)\approx 0,~~\text{when}~~~r>>1.$$
The quality of the concentration of $\mathbb{Q}$ depends on the rate at which $\alpha(r)\rightarrow 0 $ as $r \rightarrow \infty$. The normal concentration and concentration of product measures were studied in \cite{guilli,Marton-1996-2,Marton-1996-1, Tal, Tal2,Ledoux-2001}.  The concentration of measure has a close connections with the well-known functional inequalities, e.g., Poincar{\'e} and logarithmic Sobolev inequalities \cite{Bobkov-1999,Patrick-2009, Roberto-2011,otto}.
It is well known that one can study concentration of measure by establishing Talagrand concentration inequalities---widely known as {\it transportation-cost information inequalities} (TCI inequalities in short). These are inequalities that compare Wasserstein distance with relative entropy. In particular, if $\mathbb{Q}$ satisfies $L^1$-transportation cost information inequality~(cf.~Definition \ref{defi:tci}), then 
$$ \alpha(r) \le e^{-\frac{r^2}{8{\tt C}}}\quad \text{for}~~r\ge r_0=2\sqrt{2{\tt C}\ln 2} $$
for some ${\tt C}>0$; see \cite{Marton-1996-1}. The authors in \cite{otto} have studied the relation between the quadratic TCI inequality and other functional inequalities. Moreover, they have shown that quadratic TCI is deducible from the log-Sobolev inequality. So, a crucial
question would be whether the quadratic TCI is strictly weaker than the log-Sobolev inequality or not. More specifically, it would be interesting to establish the quadratic TCI inequality without the prior-knowledge of the log-Sobolev inequality. For a general theory of TCI inequality, we refer to see \cite{villan} and reference therein.  In the recent past, many authors have devoted themselves to study the quadratic TCI inequality for SPDEs  e.g.,\cite{bouf,sarantsev-2019, shang, zhang}. Very recently, the authors in \cite{kavin-majee-24} have derived the quadratic TCI inequality based on the Girsanov transformation along with the $L^1$-contraction approach in the case of bounded diffusion coefficient in the bounded domain. We verify our concentration result by showing that the laws of the solution satisfies a quadratic TCI inequality. 
%%%%%%%%%%%%%%%%%%%%%%%%%%%%%%%%%%%%%%%%%%%%%%%%%
 \subsection{Aim and outline of the paper}
 In this paper, we wish to generalize the result of \cite{kavin-majee-24} for unbounded domain as well as for the cylindrical Wiener noise---without the additional term ${\rm div}_x (F(u))$ but for any $1<p <\infty$ i.e., to establish the LDP theory for the solution of \eqref{eq:spde} via the weak convergence approach and the quadratic TCI inequality. Due to the lack of compactness of the embedding on the Gelfand triple $\mathcal{Y} \hookrightarrow L^2(\R^d) \hookrightarrow \mathcal{Y}^\prime$, one cannot apply the same method as in \cite{kavin-majee-24}. However, we overcome this issue by deriving a localized time increment estimate as in \cite{millet-2010,hong-2021} for the solution of the skeleton equation (e.g.,~Lemma \ref{lem:time-increment-un-cond-c2}) and using the equivalent sufficient conditions for LDP as in \cite{MSZ}. More precisely, we proceed as follows:
\vspace{0.2cm}

 \begin{itemize}
\item[a)] To prove the LDP for equation \eqref{eq:spde} on the solution space $\mathcal{Z}= C([0,T];L^2(\R^d))$, one crucial step is to prove the well-posedness result of the skeleton equation \eqref{eq:skeleton}.
\begin{itemize}
\item[i)] We first consider equation \eqref{eq:skeleton} with the control $h\in L^\infty(0,T;\mathcal{H})$. To do so, taking motivation from \cite{schmitz-23}, we first show the well-posedness result of the auxiliary equation \eqref{eq:skeleton-auxi} based on the technique of time discretization. We build a sequence of approximate solutions and use a-priori estimates together with the monotonicity argument to show that the limit of the approximate solutions is indeed a unique solution to the auxiliary equation \eqref{eq:skeleton-auxi}. Moreover using the well-posedness result of the auxiliary equation \eqref{eq:skeleton-auxi} and the fixed point argument, we able to establish the well-posedness result for the skeleton equation \eqref{eq:skeleton} in the case $h\in L^\infty(0,T; \mathcal{H})$; see Subsection \ref{subsec:existence-skeleton-bounded-control}. 
\item[ii)] By using a standard approximations argument and the a-priori estimates, we first deduce that the approximation solutions 
$\{u_n\}$ of equation \eqref{eq:skeleton-n-general} is a Cauchy sequence in $C([0,T]; L^2(\R^d))$--- which then yields that the limit function $\bar{u}_*$ of $\{u_n\}$ is the unique solution of the skeleton equation \eqref{eq:skeleton}. 
\end{itemize}
\item[b)] In view of the non-compactness of the embedding $\mathcal{Y}\hookrightarrow L^2(\R^d) \hookrightarrow \mathcal{Y}^\prime$ and the lack of compactness of the control sequences (as we consider only the weak topology on $S_M$ for each fixed $M<\infty$), we use the equivalent sufficient conditions as given in \cite{MSZ}. Thus, to prove LDP, it is enough to validate the conditions \ref{C1} and \ref{C2}.
\begin{itemize}
\item[i)] Thanks to Girsanov's theorem, uniform moment estimates and the uniqueness of the solutions of \eqref{eq:ldp}, we validate the condition \ref{C1}.
\item[ii)] To validate the condition \ref{C2}, one needs to shown the convergence of 
$\bar{u}_n$ to $\bar{u}_{\bar{h}}$, where $\bar{u}_n$ and $\bar{u}_{\bar{h}}$ are the solutions of \eqref{eq:skeleton} corresponding to $\bar{h}_n \in S_M$ and $\bar{h}$ respectively such that $\bar{h}_n \rightharpoonup \bar{h}$ in $L^2(0,T; \mathcal{H})$. However, one cannot expect to achieve the strong convergence of $\{\bar{u}_n\}$ in a straight-forward manner due to its lack of regularity and hence the lack of compactness of the embedding $\mathcal{Y}\hookrightarrow L^2(\R^d) \hookrightarrow \mathcal{Y}^\prime$. We use a-priori estimate \eqref{apriori-cond-c2} and a localized time increment estimate (cf. Lemma \ref{lem:time-increment-un-cond-c2}) for $\bar{u}_n$ and $\bar{u}_{\bar{h}}$ along with the property of the Hilbert-Schmidt operator (of the noise coefficient $\sigma$) to validate the condition \ref{C2}; see Subsection \ref{subsec:cond-c2}. 
\end{itemize}
\item[c)] 
We use Girsanov's transformation together with the formulation of the relative entropy in terms of controlled process as appeared in the Girsanov theorem and the standard $L^2$-estimation to establish the quadratic TCI inequality for the solution of \eqref{eq:spde} on $C([0,T];L^2(\R^d))$ for the bounded diffusion coefficient i.e., when $\sigma$ satisfies the inequality \eqref{cond:bound-sigma}. In particular, the laws of the strong solution of the underlying problem as a Borel probability measure concentres on on $C([0,T];L^2(\R^d))$.
 \end{itemize}
  
 \vspace{0.1cm}
The rest of the paper is organized as follows. In Section \ref{sec:technical}, we state the assumptions, basic definitions, the necessary framework and then state the main results of the paper. Section \ref{sec:existence-uni-skeleton} is devoted to prove well-posedness result for the skeleton equation \eqref{eq:skeleton}. The proofs of LDP and the quadratic TCI inequality are provided in Sections \ref{sec:proof-ldp} and \ref{sec:proof-tci} respectively. 
%%%%%%%%%%%%%%%%%%%%%%%%%%%%%%%%%%%%%%%%%%%%%%%%%%%%%%%%%%%%%%%%%%%%%%%%%%%%%%%%%%%%%%%%%%%%%%%%%%%%%%%%%%%%%%%%%
\section{Preliminaries and technical framework}\label{sec:technical}
In this paper, we use the letters $C$ to denote various generic constants. For any $p\in (1,\infty)$, let $p^\prime$ be the convex conjugate of $p$. Consider the space 
\begin{align*}
\mathcal{Y}:=\big\{ v \in L^2(\R^d):~~\nabla v \in L^p(\R^d)^d\big\}\,
\end{align*}
endowed with the norm 
$$ \|v\|_{\mathcal{Y}}:= \|v\|_{L^2(\R^d)} + \|\nabla v\|_{L^p(\R^d)^d},~~~v\in \mathcal{Y}\,.$$
Let $\mathcal{Y}^\prime$ be the dual of $\mathcal{Y}$. Then, $\mathcal{Y}^\prime$ has the following characterization:
$$ \mathcal{Y}^\prime=\big\{ f-{\rm div}\,g:~~ f\in L^2(\R^d),~~g\in L^{p^\prime}(\R^d)^d\big\}.$$
As mentioned in \cite{schmitz-23}, $\mathcal{Y}$ is a reflexive and separable Banach space. Moreover, the $p$-Laplace operator 
\begin{align*}
\Delta_p: &~\mathcal{Y} \goto \mathcal{Y}^\prime \\ & v\mapsto {\rm div}\big( |\nabla v|^{p-2}\nabla v\big)~~~(1<p<\infty)
\end{align*}
is well-defined. Furthermore, the triple $\mathcal{Y}\hookrightarrow L^2(\R^d) \hookrightarrow \mathcal{Y}^\prime$
form a Gelfand triple. For any two separable Hilbert spaces $\mathcal{U}$ and $\mathcal{V}$, let $\mathcal{L}_2(\mathcal{U}, \mathcal{V})$ be the space of all Hilbert-Schmidt operators from $\mathcal{U}$ to $\mathcal{V}$. Let $\mathcal{H}_0$ be a Hilbert space such that the Wiener process $W$ has $\mathbb{P}$-a.s., continuous trajectories in $C([0,T]; \mathcal{H}_0)$ and the embedding 
$\mathcal{H}\hookrightarrow \mathcal{H}_0$ is Hilbert-Schmidt.  For any separable Hilbert space $\mathcal{W}$, we denote $N_w^2(0,T;\mathcal{W})$, as the space of all square integrable $\{\mathcal{F}_t\}_{t\ge 0}$-adapted 
$\mathcal{W}$-valued process $u$ such that $\mathbb{E}\big[ \int_0^T \|u(t)\|_{\mathcal{W}}^2\,dt\big] < \infty$. 
\vspace{0.1cm}

We now recall the notion of (strong) solution for the SPDE \eqref{eq:spde} from \cite{schmitz-23}.
\begin{defi}\label{defi:strong-solution-spde}
We say that $u\in N_w^2(0,T; L^2(\R^d))$ is a strong solution of \eqref{eq:spde} with initial condition $u_0 \in L^2(\R^d)$, if 
\begin{itemize}
\item[i)] $\mathbb{P}$-a.s., $u\in C([0,T];L^2(\R^d))$, and $u\in L^q(\Omega \times (0,T); \mathcal{Y})$, where $q=\min\{ p, 2\}$.
\item[ii)] For all $t\in [0,T]$ and $\mathbb{P}$-a.s., there holds
\begin{align*}
u(t)= u_0 + \int_0^t \Delta_p(u(s))\,ds + \int_0^t \sigma(u(s))\,dW(s)\,.
\end{align*}
\end{itemize}
\end{defi}
The aim of this paper is to establish the LDP and the quadratic TCI inequality (hence measure concentration) for the
strong solution of \eqref{eq:spde}, and we will do so under the following assumptions:
\begin{Assumptions}
\item \label{A1} The initial function $u_0\in L^2(\R^d)$.
\item \label{A2}$\sigma: L^2(\R^d) \goto \mathcal{L}_2(\mathcal{H}, L^2(\R^d)) $ such that $\sigma$ is Lipschitz continuous, i.e., there exists $c_\sigma >0$ such that
\begin{align}
\|\sigma(u)-\sigma(v)\|_{\mathcal{L}_2(\mathcal{H}, L^2(\R^d))} \le c_\sigma\|u-v\|_{L^2(\R^d)},\quad \forall~u,v \in L^2(\R^d)\,.\label{inq:lip-sigma}
\end{align}
\item \label{A3}  $\sigma(\cdot)$ has linear growth i.e., there exists $\sigma_b >0$ such that
\begin{align}
\|\sigma(u)\|_{\mathcal{L}_2(\mathcal{H}, L^2(\R^d))}\le \sigma_b (1+ \|u\|_{L^2(\R^d)}),\quad \forall~u\in L^2(\R^d)\,. \label{inq: growth-sigma}
\end{align}
\end{Assumptions}
We recall the result of the well-posedness of strong solution for \eqref{eq:spde}; see \cite[Theorems 1.3 $\&$ 1.4]{schmitz-23}.
\begin{thm}\label{thm:exis-uni-spde}
Under the assumptions \ref{A1}-\ref{A3}, equation \eqref{eq:spde} has a unique strong solution in the sense of Definition \ref{defi:strong-solution-spde}
\end{thm}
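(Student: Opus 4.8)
The plan is to follow the two–step strategy behind \cite{schmitz-23}: first solve the problem with a \emph{prescribed} driving noise (the ``additive'' case) and then recover the genuinely multiplicative equation \eqref{eq:spde} by a fixed–point argument. The structural facts that drive everything are that $\Delta_p\colon\mathcal{Y}\to\mathcal{Y}^\prime$ is monotone, hemicontinuous and bounded and satisfies the coercivity identity $\langle -\Delta_p v,v\rangle=\|\nabla v\|_{L^p(\R^d)^d}^p$, while the $L^2(\R^d)$–control that is \emph{not} provided by the operator comes from the identity part of the equation, the initial datum $u_0$ (assumption \ref{A1}) and the noise. Crucially, since the embeddings in the Gelfand triple $\mathcal{Y}\hookrightarrow L^2(\R^d)\hookrightarrow\mathcal{Y}^\prime$ are not compact, the whole argument has to be run with weak/weak-$*$ topologies and the monotonicity structure. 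Concretely, for a fixed $v\in N_w^2(0,T;L^2(\R^d))$ I would consider $du-\Delta_p(u)\,dt=\sigma(v)\,dW$, $u(0)=u_0$, and run the semi–implicit Euler scheme $u^{k+1}-u^{k}-\Dt\,\Delta_p(u^{k+1})=\sigma(v(t_k))\,\Delta_k W$ on a partition of mesh $\Dt=T/N$. Each step is the inversion of $I-\Dt\,\Delta_p$, which is strongly monotone, coercive and hemicontinuous from $\mathcal{Y}$ to $\mathcal{Y}^\prime$, so the Browder–Minty theorem gives a unique $u^{k+1}\in\mathcal{Y}$. Applying the discrete Itô formula to $\|u^{k}\|_{L^2(\R^d)}^2$, summing, and invoking \ref{A3}, the Burkholder–Davis–Gundy inequality and the discrete Gronwall lemma yields, for the piecewise interpolant $u_N$,
\[ \mathbb{E}\Big[\sup_{0\le t\le T}\|u_N(t)\|_{L^2(\R^d)}^2\Big]+\mathbb{E}\Big[\int_0^T\|\nabla u_N(s)\|_{L^p(\R^d)^d}^p\,ds\Big]\le C\big(1+\|u_0\|_{L^2(\R^d)}^2\big), \]
uniformly in $N$.

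From this bound one extracts (along a subsequence) $u_N\rightharpoonup u$ in $L^q(\Omega\times(0,T);\mathcal{Y})$ with $q=\min\{p,2\}$ and weak-$*$ in $L^2(\Omega;L^\infty(0,T;L^2(\R^d)))$, and, by boundedness of $\Delta_p$, $\Delta_p(u_N)\rightharpoonup\chi$ in $L^{p^\prime}(\Omega\times(0,T);\mathcal{Y}^\prime)$. Since here the stochastic integrand is prescribed, $\int_0^\cdot\sigma(v)\,dW$ passes to the limit without difficulty, and $u$ solves the evolution equation with $\chi$ in place of $\Delta_p(u)$, together with $u\in C([0,T];L^2(\R^d))$ $\mathbb{P}$-a.s.\ via the variational Itô formula. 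The remaining task — the main technical point — is to identify $\chi=\Delta_p(u)$ by Minty's monotonicity trick: from the Itô energy identity for $u$ and weak lower semicontinuity one obtains $\limsup_N\mathbb{E}\int_0^T\langle\Delta_p(u_N),u_N\rangle\,dt\le\mathbb{E}\int_0^T\langle\chi,u\rangle\,dt$ once the (convergent) noise contribution and the initial/terminal $L^2$–terms are accounted for; then for arbitrary $w\in L^q(\Omega\times(0,T);\mathcal{Y})$ monotonicity gives $\mathbb{E}\int_0^T\langle\Delta_p(u_N)-\Delta_p(w),u_N-w\rangle\,dt\le 0$, which in the limit becomes $\mathbb{E}\int_0^T\langle\chi-\Delta_p(w),u-w\rangle\,dt\le 0$, and the choice $w=u\pm\lambda\varphi$ with $\lambda\downarrow0$ plus hemicontinuity forces $\chi=\Delta_p(u)$. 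Uniqueness for this prescribed–noise problem follows from the same Itô-plus-monotonicity computation applied to the difference of two solutions.

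Next I would set up the fixed point for the multiplicative equation. Define $\Phi\colon N_w^2(0,T;L^2(\R^d))\to N_w^2(0,T;L^2(\R^d))$ by $\Phi(v)=u$, the solution just constructed with driving term $\sigma(v)\,dW$. For $v_1,v_2$ with $u_i=\Phi(v_i)$, apply the Itô formula to $\|u_1-u_2\|_{L^2(\R^d)}^2$: monotonicity of $\Delta_p$ removes the drift contribution, \ref{A2} controls the quadratic variation, and after BDG and Gronwall one gets
\[ \mathbb{E}\Big[\sup_{0\le s\le t}\|u_1(s)-u_2(s)\|_{L^2(\R^d)}^2\Big]\le C\int_0^t\mathbb{E}\Big[\sup_{0\le r\le s}\|v_1(r)-v_2(r)\|_{L^2(\R^d)}^2\Big]\,ds. \]
Equipping the space with the equivalent norm $\big(\sup_{0\le t\le T}e^{-\beta t}\,\mathbb{E}[\sup_{0\le s\le t}\|\cdot(s)\|_{L^2(\R^d)}^2]\big)^{1/2}$ and taking $\beta$ large makes $\Phi$ a strict contraction, so the Banach fixed–point theorem produces a unique $u$ solving \eqref{eq:spde} in the sense of Definition \ref{defi:strong-solution-spde}. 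Uniqueness of the strong solution is then immediate from the same argument applied directly to two solutions of \eqref{eq:spde}: the drift difference is killed by monotonicity, the noise difference is Lipschitz by \ref{A2}, and Gronwall gives $u_1\equiv u_2$.

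The principal obstacle is the identification step (Minty's trick): because the Gelfand triple is not compactly embedded, there is no strong $L^2$–convergence of the time–discrete approximations and $\Delta_p$ cannot be identified by passing to an a.e.\ convergent subsequence, so one must combine monotonicity with a delicate use of the $L^2$–a priori bound and the Itô energy identity to control $\limsup_N\mathbb{E}\int_0^T\langle\Delta_p(u_N),u_N\rangle$ — this is precisely why the multiplicative noise is handled by a fixed point rather than directly in the discretization. A secondary subtlety, to be tracked carefully throughout, is that coercivity of $-\Delta_p$ only furnishes $W^{1,p}(\R^d)$–control and not control of the full $\mathcal{Y}$–norm, so the $L^2(\R^d)$–estimates must be obtained separately from the identity part of the scheme and the noise term.
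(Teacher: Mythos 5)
The paper does not prove Theorem~\ref{thm:exis-uni-spde} itself; it cites \cite[Theorems 1.3 \& 1.4]{schmitz-23}. Nevertheless, the two-step scheme you describe --- a semi-implicit Rothe discretization plus Minty's monotonicity trick for a prescribed noise coefficient $\sigma(v)$, followed by a Banach fixed-point in an exponentially weighted norm to pass to the genuinely multiplicative $\sigma(u)$ --- is exactly the route the paper attributes to \cite{schmitz-23} and itself reproduces, in deterministic form, for the skeleton equation in Section~\ref{sec:existence-uni-skeleton}. So in broad strokes your proposal matches the intended proof.

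There is, however, one direction-of-inequality error in the limit-identification step that, taken literally, would prevent the Minty argument from closing. With the convention you use throughout ($\Delta_p v = \mathrm{div}(|\nabla v|^{p-2}\nabla v)$, so $\langle -\Delta_p v, v\rangle = \|\nabla v\|_{L^p}^p$ and monotonicity reads $\langle \Delta_p v - \Delta_p w, v-w\rangle \le 0$), the discrete energy identity for $u_N$, the It\^o identity for the limit $u$, and weak lower semicontinuity of $\|\cdot\|_{L^2}$ for the terminal term yield
$\liminf_N \mathbb{E}\int_0^T\langle\Delta_p u_N, u_N\rangle\,dt \ge \mathbb{E}\int_0^T\langle\chi, u\rangle\,dt$,
equivalently $\limsup_N \mathbb{E}\int_0^T\|\nabla u_N\|_{L^p}^p\,dt \le \mathbb{E}\int_0^T\langle -\chi, u\rangle\,dt$ (compare \eqref{inq:weak-limit-identification-4}). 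Your stated ``$\limsup_N\mathbb{E}\int\langle\Delta_p u_N,u_N\rangle \le \mathbb{E}\int\langle\chi,u\rangle$'' is, once $\chi=\Delta_p u$, just the trivial wlsc bound $\liminf\|\nabla u_N\|_p^p\ge\|\nabla u\|_p^p$ and carries no new information; and it cannot be combined with the monotonicity bound $\limsup_N\langle\Delta_p u_N,u_N\rangle \le \langle\chi,w\rangle+\langle\Delta_p w,u\rangle-\langle\Delta_p w,w\rangle$ to give $\langle\chi - \Delta_p w, u-w\rangle\le 0$, since for that conclusion one needs $\langle\chi,u\rangle$ bounded \emph{above} by $\limsup_N\langle\Delta_p u_N,u_N\rangle$, i.e.\ precisely the $\liminf\ge$ direction. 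Reversing the inequality is the only correction needed; the rest of the argument (uniform a priori bounds via the discrete It\^o formula, weak extraction, convergence of the prescribed stochastic integral, and the contraction estimate for the fixed point) is consistent with the paper's treatment of the analogous skeleton problem.
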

\subsection{Large deviation principle}  Recall some basic definitions and results of large deviation theory. Let $\{X_\epsilon\}_{\epsilon > 0}$  be a sequence of random variable defined on a given probability space $(\Omega, \mathcal{F}, \mathbb{P})$ taking values in a Polish space $\mathcal{Z}$. It is well-known that, the large deviation principle characterizes the exponential decay of the remote tails of some sequences of probability distributions. The rate of such decay is described by the ``rate function".
     \begin{defi}[Rate Function]
      A function  $ {\tt I}:\mathcal{Z} \rightarrow [0,\infty]$ is called a {\it rate function} if ${\tt I}$ is lower semi-continuous. A rate function ${\tt I}$ is called a good rate function if the level set $\{x\in \mathcal{Z} : {\tt I}(x) \leq M \}$ is compact for each $M < \infty$.
\end{defi}
\begin{defi} [Large deviation principle]
The sequence  $\{X_\epsilon\}_{\epsilon > 0}$  is said to satisfy the large deviation principle on $\mathcal{Z}$ with rate function ${\tt I}$ if the following conditions hold:
\begin{itemize}
\item [i)] For any open set $G\subset \mathcal{Z}$, 
\begin{align*}
  -\underset{x\in G}\inf\, {\tt I}(x) \leq \underset{\epsilon \longrightarrow 0}{\liminf}~ \epsilon^{2}\log \big(\mathbb{P}(X_\epsilon \in G)\big)
\end{align*}
\item[ii)] For any closed set $K \subset \mathcal{Z}$, 
\begin{align*}
       \underset{\epsilon \longrightarrow 0}{\limsup} ~\epsilon^{2} \log \big(\mathbb{P}(X_\epsilon \in K)\big)
        \leq  -\underset{x\in K} \inf\, {\tt I}(x)\,.
\end{align*}
\end{itemize}
\end{defi}
Let us recall the definition of the Laplace principle cf.~\cite{dembo,dupuis}.

\begin{defi}
The sequence  $\{X_\epsilon\}_{\epsilon > 0}$  is said to satisfy the Laplace principle on $\mathcal{Z}$ with rate function ${\tt I}$ if for each $h\in C_b(\mathcal{Z}; \R)$, the following holds:
\begin{align*}
\lim_{\epsilon \rightarrow 0} \eps^2~ \log \Big\{\mathbb{E}\Big[ \exp\Big(-\frac{h(X_\eps)}{\eps^2} \Big)\Big]\Big\}
=- \inf_{x\in \mathcal{Z}} \big\{ h(x) + {\tt I}(x)\big\}\,.
\end{align*}
\end{defi}
Thanks to  \cite[Varadhan's lemma]{varadhan-84} and \cite[Bryc's theorem]{dembo}, we have the following theorem regarding the equivalence between the LDP and the Laplace principle.
\begin{thm}%\label{thm:equiv-ldp-lp}
The sequence  $\{X_\epsilon\}_{\epsilon > 0}$  on $\mathcal{Z}$ satisfies the LDP with a good rate function ${\tt I}$ if and only if it satisfies the Laplace principle with same rate function ${\tt I}$. 
\end{thm}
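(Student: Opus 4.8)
This is the classical equivalence between the large deviation principle and the Laplace principle, going back to Varadhan for the implication ``LDP $\Rightarrow$ Laplace'' and to Bryc for the converse; the plan is to reproduce these two arguments in the present Polish setting. Fix a complete metric ${\rm dist}$ on $\mathcal{Z}$ compatible with its topology; every bounded continuous test function used below will be built from ${\rm dist}$, which is what makes the construction available on an abstract Polish space.

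\emph{LDP $\Rightarrow$ Laplace.} Fix $h\in C_b(\mathcal{Z};\R)$ and set $L_\eps:=\eps^2\log\mathbb{E}\big[\exp(-h(X_\eps)/\eps^2)\big]$. For the lower bound I would fix $x_0\in\mathcal{Z}$ with ${\tt I}(x_0)<\infty$ and $\delta>0$, use continuity of $h$ to pick an open neighbourhood $G\ni x_0$ on which $h<h(x_0)+\delta$, restrict the expectation to $\{X_\eps\in G\}$ to obtain $L_\eps\ge -(h(x_0)+\delta)+\eps^2\log\mathbb{P}(X_\eps\in G)$, and then invoke the LDP lower bound for the open set $G$; letting $\delta\downarrow 0$ and taking the supremum over $x_0$ gives $\liminf_{\eps\to0}L_\eps\ge -\inf_{x\in\mathcal{Z}}\{h(x)+{\tt I}(x)\}$. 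For the upper bound I would cover the bounded range $h(\mathcal{Z})$ by finitely many intervals $[a_j,a_j+\delta]$, let $F_j:=h^{-1}([a_j,a_j+\delta])$ be the corresponding closed (nonempty) sets, estimate $\mathbb{E}\big[\exp(-h(X_\eps)/\eps^2)\big]\le\sum_j e^{-a_j/\eps^2}\mathbb{P}(X_\eps\in F_j)$, apply the LDP upper bound on each $F_j$, and use that $\eps^2\log\big(\sum_j b_j(\eps)\big)\to\max_j\big(\lim_{\eps\to0}\eps^2\log b_j(\eps)\big)$ for finitely many nonnegative terms; since $a_j\ge\inf_{F_j}h-\delta$, this yields $\limsup_{\eps\to0}L_\eps\le -\min_j\big(a_j+\inf_{F_j}{\tt I}\big)\le -\inf_x\{h(x)+{\tt I}(x)\}+\delta$, and $\delta\downarrow 0$ closes this direction. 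Goodness of ${\tt I}$ is not used here.

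\emph{Laplace $\Rightarrow$ LDP.} For the upper bound let $F\subset\mathcal{Z}$ be closed and, for each positive integer $N$, put $h_N(x):=N\big(1\wedge{\rm dist}(x,F)\big)\in C_b(\mathcal{Z};\R)$. Since $h_N\equiv 0$ on $F$ and $e^{-h_N/\eps^2}\ge 0$, we have $\mathbb{P}(X_\eps\in F)\le\mathbb{E}\big[\exp(-h_N(X_\eps)/\eps^2)\big]$, so the Laplace principle for $h_N$ forces $\limsup_{\eps\to0}\eps^2\log\mathbb{P}(X_\eps\in F)\le -\inf_x\{h_N(x)+{\tt I}(x)\}$, and it remains to send $N\to\infty$ and show $\inf_x\{h_N(x)+{\tt I}(x)\}\to\inf_{x\in F}{\tt I}(x)$. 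For the lower bound let $G$ be open, fix $x_0\in G$ with ${\tt I}(x_0)<\infty$ and $\delta>0$ with $\overline{B(x_0,\delta)}\subset G$, and put $g(x):=N\big(1\wedge({\rm dist}(x,x_0)/\delta)\big)$, which is bounded continuous, vanishes at $x_0$, and equals $N$ off $B(x_0,\delta)$; splitting the expectation over $B(x_0,\delta)$ and its complement gives $\mathbb{E}\big[\exp(-g(X_\eps)/\eps^2)\big]\le\mathbb{P}(X_\eps\in G)+e^{-N/\eps^2}$, and taking $\eps^2\log$, using $\log(a+b)\le\log 2+\max\{\log a,\log b\}$, together with the Laplace principle for $g$, leads to $\max\{\liminf_{\eps\to0}\eps^2\log\mathbb{P}(X_\eps\in G),-N\}\ge -\inf_x\{g(x)+{\tt I}(x)\}\ge -{\tt I}(x_0)$; choosing $N>{\tt I}(x_0)$ removes the maximum, and taking the supremum over $x_0\in G$ yields $\liminf_{\eps\to0}\eps^2\log\mathbb{P}(X_\eps\in G)\ge -\inf_{x\in G}{\tt I}(x)$.

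\emph{Main obstacle.} The one genuinely non-routine point is the limit $\lim_{N\to\infty}\inf_x\{h_N(x)+{\tt I}(x)\}=\inf_{x\in F}{\tt I}(x)$ used in the closed-set upper bound. The inequality ``$\le$'' is immediate because $h_N$ vanishes on $F$; for ``$\ge$'' one argues by contradiction: if near-minimizers $x_N$ satisfied $h_N(x_N)+{\tt I}(x_N)<\inf_F{\tt I}-\eta$ for some $\eta>0$, they would lie in the sublevel set $\{{\tt I}\le\inf_F{\tt I}\}$, which is compact \emph{precisely because} ${\tt I}$ is a good rate function; a convergent subsequence $x_{N_k}\to x_*$ would then have to satisfy $x_*\in F$ (otherwise ${\rm dist}(x_{N_k},F)$ stays bounded away from $0$ and $h_{N_k}(x_{N_k})\to\infty$), whence ${\tt I}(x_*)\le\liminf_k{\tt I}(x_{N_k})<\inf_F{\tt I}$ by lower semicontinuity, contradicting $x_*\in F$. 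This is the only place the good rate function hypothesis is essential; everything else is a routine combination of exponential Chebyshev-type estimates and the elementary Laplace limit for finite sums of exponentials.
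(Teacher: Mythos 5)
Your proof is correct in substance, but note that the paper does not actually prove this theorem: it is stated as the classical equivalence and justified only by citing Varadhan's lemma and Bryc's theorem, so your write-up is a self-contained reconstruction of those references rather than a parallel to an argument in the text. The first half of your proposal is the standard proof of Varadhan's lemma (localize near a near-minimizer for the lower bound; slice the bounded range of $h$ into finitely many closed preimages $F_j$ and use the Laplace asymptotics of a finite sum of exponentials for the upper bound), and the second half is the Dupuis--Ellis argument that the Laplace principle with a \emph{given} good rate function implies the LDP, via the distance-based test functions $N\big(1\wedge\mathrm{dist}(\cdot,F)\big)$; this is in fact more directly what is needed here than Bryc's theorem proper, which reconstructs the rate function from exponential tightness and the existence of Laplace limits. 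Two small points to tighten. First, in the Varadhan upper bound the inequality you need is the pointwise bound $a_j\ge h(x)-\delta$ for $x\in F_j$, which gives $a_j+\inf_{F_j}{\tt I}\ge\inf_{F_j}(h+{\tt I})-\delta$; the statement you wrote, $a_j\ge\inf_{F_j}h-\delta$, only yields $a_j+\inf_{F_j}{\tt I}\ge\inf_{F_j}h+\inf_{F_j}{\tt I}-\delta$, and the sum of infima sits on the wrong side of $\inf_{F_j}(h+{\tt I})$, so the chain as written does not close (the fix is immediate). Second, in the key limit $\inf_x\{h_N(x)+{\tt I}(x)\}\to\inf_F{\tt I}$ your contradiction argument implicitly assumes $\inf_F{\tt I}<\infty$ (otherwise the sublevel set $\{{\tt I}\le\inf_F{\tt I}\}$ is the whole space and need not be compact); in general one fixes an arbitrary finite $M<\inf_F{\tt I}$, shows $\liminf_N\inf_x\{h_N+{\tt I}\}\ge M$ by running the same compactness and lower-semicontinuity argument inside the compact level set $\{{\tt I}\le M\}$, and lets $M\uparrow\inf_F{\tt I}$; this is exactly where goodness of ${\tt I}$ enters. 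With these cosmetic repairs the argument is complete and consistent with the sources the paper cites.
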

For $\eps>0$, consider the evolutionary PDE driven by small multiplicative noise:
\begin{equation}\label{eq:ldp}
\begin{aligned}
&du_\eps(t)- {\rm div}_x(|\nabla u_\eps|^{p-2}\nabla u_\eps(t))\,dt= \eps\, \sigma(u_\eps(t))\,dW(t),~(t,x)\in (0,T]\times \R^d, \\
&u_\eps(0,x)=u_0(x),~~x\in \R^d\,.
\end{aligned}
\end{equation}
Under the assumptions \ref{A1}-\ref{A3}, equation \eqref{eq:ldp} has a unique strong solution $u_\eps \in L^2(\Omega; C([0,T];L^2(\R^d)))\cap L^q(\Omega\times(0,T); \mathcal{Y})$, cf.~Theorem \ref{thm:exis-uni-spde}.  Furthermore, from the infinite dimensional version of Yamada-Watanabe theorem in \cite{Rock-2008}, there exists a Borel-measurable function $\mathcal{G_{\epsilon}} : C([0,T];\mathcal{H}_0) \rightarrow C([0,T];L^2(\R^d))\cap L^q(0,T;\mathcal{Y})\subset \mathcal{Z}: = C([0,T];L^{2}(\R^d))$ such that $u_{\epsilon} := \mathcal{G}_{\epsilon}(W)$ a.s.  
\vspace{0.1cm}

To prove LDP, we need to introduce the deterministic skeleton equation. We recall that $\left\{W(t) \right\}_{t \geq 0}$ is a cylindrical Wiener process on $\mathcal{H}$ with respect to the filtered probability space $\big(\Omega, \mathbb{P}, \mathcal{F},  \{ \mathcal{F}_t\}_{t \geq 0} \big)$ such that its path is continuous on $\mathcal{H}_0$. Consider the following sets:
\begin{align}
      &\mathcal{A} := \Big\{ \phi :  \text{ $\phi$  is a $\mathcal{H}$-valued  $\{\mathcal{F}_t\}$-adapted process  such that }  \int_0^T\|\phi(s)\|_{\mathcal{H}}^2 \, {\rm d}s<  \infty,  \, \mathbb{P}-a.s.\Big\}, \notag \\
      &S_M := \Big\{h \in L^2([0,T]; \mathcal{H}) : \int_0^T\|h(s)\|_{\mathcal{H}}^2\,{\rm d}s\leq M\Big\}, ~~~
    \mathcal{A}_M :=\Big\{\phi \in \mathcal{A} : \phi(\omega) \in S_M,  \mathbb{P}-a.s.\Big\}. \notag
\end{align}
It is well known that $S_M$ endowed with the weak topology is a Polish space. We always consider the weak topology on $S_M$ for our later analysis. 
For $h \in L^2(0,T; \mathcal{H})$, we consider the following skeleton equation~(deterministic)
\begin{equation}
\label{eq:skeleton}
   \begin{aligned}
            du_{h}  - {\rm div}_x ( |\nabla u_{h}|^{p-2}\nabla u_{h})\,dt &=  \sigma(u_{h})\,h(t)\,{\rm d}t,~~~(t,x)\in (0,T] \times \R^d, \\
            u_{h}(0,x) &= u_0(x),~~~x\in \R^d\,.
        \end{aligned}
\end{equation}
\begin{thm}\label{thm:skeleton}
Let the assumptions \ref{A1}-\ref{A3} hold true. Then there exists a unique solution $u_h \in C([0,T];L^2(\R^d))\cap L^{q}(0,T; \mathcal{Y})$ of the skeleton equation \eqref{eq:skeleton}.  Moreover, for fixed $N\in \mathbb{N}$, there exists a constant $C_N>0$ such that 
\begin{align}
\sup_{h\in S_N} \Big\{ \sup_{t\in [0,T]} \|u_h(t)\|_{L^2(\R^d)}^2 + \int_0^T \|u_h(t)\|_{\mathcal{Y}}^q\,dt \Big\} \le C_N\,. \label{esti:uniform-skeleton-original}
\end{align}
\end{thm}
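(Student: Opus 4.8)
The plan is to establish existence and uniqueness in two stages, following the outline announced in the introduction. First I would treat the case of bounded controls $h \in L^\infty(0,T;\mathcal{H})$, and then pass to general $h \in L^2(0,T;\mathcal{H})$ (in particular to $h \in S_N$) by an approximation/Cauchy argument. For the bounded-control case, since a semigroup/mild formulation is unavailable, I would introduce an auxiliary equation in which the multiplicative coefficient $\sigma(u_h)$ is frozen to $\sigma(v)$ for a fixed $v \in C([0,T];L^2(\R^d))$, i.e.
\begin{equation*}
du - \mathrm{div}_x(|\nabla u|^{p-2}\nabla u)\,dt = \sigma(v)\,h(t)\,dt, \qquad u(0)=u_0,
\end{equation*}
and solve it by semi-implicit time discretization: partition $[0,T]$ into steps of size $\Dt$, define the piecewise-constant/piecewise-affine interpolants of the discrete solutions, derive a priori bounds in $L^\infty(0,T;L^2(\R^d)) \cap L^p(0,T;\mathcal{Y})$ by testing the scheme against the iterate and using the coercivity $\langle -\Delta_p w, w\rangle = \|\nabla w\|_{L^p}^p$ together with Young's inequality to absorb the forcing term $\int \sigma(v)h\,dt$ (here boundedness of $h$ in $\mathcal{H}$ and the linear growth \ref{A3} are used), and then pass to the limit $\Dt \to 0$ using the monotonicity (Minty–Browder) trick to identify the weak limit of $|\nabla u_{\Dt}|^{p-2}\nabla u_{\Dt}$ with $|\nabla u|^{p-2}\nabla u$. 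Uniqueness for the auxiliary equation follows from the monotonicity of $-\Delta_p$ (the difference of two solutions has zero forcing). This defines a solution map $v \mapsto u$ on $C([0,T];L^2(\R^d))$.

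Next I would run a fixed-point argument on this map. Taking two inputs $v_1,v_2$, subtracting the equations for $u_i$ (solutions with frozen coefficient $\sigma(v_i)$), testing the difference against $u_1-u_2$, using monotonicity of $-\Delta_p$ to discard the diffusion contribution, and estimating $\int_0^t \langle (\sigma(v_1)-\sigma(v_2))h(s), u_1-u_2\rangle\,ds$ via Cauchy–Schwarz, the Lipschitz bound \ref{A2}, and $\|h(s)\|_{\mathcal{H}} \le \|h\|_{L^\infty(0,T;\mathcal{H})}$, one gets
\begin{equation*}
\sup_{s\le t}\|u_1(s)-u_2(s)\|_{L^2(\R^d)}^2 \le C\,\|h\|_{L^\infty}^2 \int_0^t \sup_{r\le s}\|v_1(r)-v_2(r)\|_{L^2(\R^d)}^2\,ds,
\end{equation*}
so the map is a contraction after finitely many iterations (or on a short time interval, then bootstrapped), yielding a unique $u_h \in C([0,T];L^2(\R^d)) \cap L^q(0,T;\mathcal{Y})$ solving \eqref{eq:skeleton} for bounded $h$. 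For general $h \in L^2(0,T;\mathcal{H})$, approximate $h$ by bounded controls $h_n$ (e.g. truncation) with $h_n \to h$ in $L^2(0,T;\mathcal{H})$; the same energy computation applied to the difference $u_{h_n}-u_{h_m}$, now keeping the coefficient genuinely nonlinear and using \ref{A2} plus a Gronwall argument, shows $\{u_{h_n}\}$ is Cauchy in $C([0,T];L^2(\R^d))$, and the uniform a priori estimate (below) gives weak compactness in $L^q(0,T;\mathcal{Y})$; passing to the limit with the monotonicity trick identifies the limit $\bar u_*$ as the unique solution $u_h$.

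Finally, the uniform bound \eqref{esti:uniform-skeleton-original}: for $h\in S_N$, test \eqref{eq:skeleton} against $u_h(t)$, integrate in time, and use
\begin{equation*}
\tfrac12\|u_h(t)\|_{L^2}^2 + \int_0^t \|\nabla u_h\|_{L^p}^p\,ds = \tfrac12\|u_0\|_{L^2}^2 + \int_0^t \langle \sigma(u_h(s))h(s), u_h(s)\rangle\,ds;
\end{equation*}
bound the last term by $\int_0^t \|\sigma(u_h(s))\|_{\mathcal{L}_2}\|h(s)\|_{\mathcal{H}}\|u_h(s)\|_{L^2}\,ds \le \int_0^t \sigma_b(1+\|u_h\|_{L^2})\|h(s)\|_{\mathcal{H}}\|u_h\|_{L^2}\,ds$, then since $\int_0^T\|h\|_{\mathcal{H}}^2\,ds \le N$ and $\|h(s)\|_{\mathcal{H}}\|u_h(s)\|_{L^2} \le \tfrac12(\|h(s)\|_{\mathcal{H}}^2 + \|h(s)\|_{\mathcal{H}}^2\|u_h(s)\|_{L^2}^2)$, apply Gronwall's inequality (with the integrable weight $\|h(s)\|_{\mathcal{H}}^2$) to control $\sup_{t}\|u_h(t)\|_{L^2}^2$ by a constant $C_N$ depending only on $N$, $\sigma_b$, $T$, $\|u_0\|_{L^2}$; feeding this back bounds $\int_0^T\|\nabla u_h\|_{L^p}^p\,ds$, and combined with the $L^2$ bound and the definition of $\|\cdot\|_{\mathcal{Y}}$ this gives $\int_0^T\|u_h\|_{\mathcal{Y}}^q\,dt \le C_N$ (using $q = \min\{p,2\}$ to handle $\|\nabla u_h\|_{L^p}^q \le 1 + \|\nabla u_h\|_{L^p}^p$). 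The main obstacle is the limit passage in the nonlinear diffusion term: because $\mathcal{Y}\hookrightarrow L^2(\R^d)$ is not compact, one cannot extract a strongly convergent subsequence of gradients, so the identification of the weak limit of $|\nabla u_{\Dt}|^{p-2}\nabla u_{\Dt}$ must be done purely via monotonicity (lower semicontinuity of the energy and the Minty trick applied to $\int_0^T\langle -\Delta_p u_{\Dt} + \Delta_p w, u_{\Dt} - w\rangle\ge 0$), which requires care in handling the time-discretization error terms and the forcing term at the discrete level.
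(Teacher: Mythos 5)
Your proposal follows essentially the same route as the paper: freeze the noise coefficient to get an auxiliary equation solved by semi-implicit time discretization plus the Minty--Browder monotonicity trick, run a fixed-point argument for bounded controls $h\in L^\infty(0,T;\mathcal{H})$ (the paper sets this up as a Banach fixed point on an exponentially weighted $L^2(0,T;L^2(\R^d))$-norm rather than short-time bootstrapping, but these are interchangeable), approximate general $h\in L^2(0,T;\mathcal{H})$ by bounded $h_n$ and show $\{u_{h_n}\}$ is Cauchy in $C([0,T];L^2(\R^d))$, and obtain \eqref{esti:uniform-skeleton-original} by testing against $u_h$ and applying Gronwall with the integrable weight $\|h(s)\|_{\mathcal{H}}^2$. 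Nothing essential is missing, and you correctly identify the key obstruction (non-compactness of $\mathcal{Y}\hookrightarrow L^2(\R^d)$) forcing a purely monotonicity-based limit identification.
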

This allow us to define $\mathcal{G}_{0} : C([0,T];\mathcal{H}_0) \rightarrow  \mathcal{Z}$ by
\begin{equation}
    \mathcal{G}_{0}(\phi) := \begin{cases}
\displaystyle   u_{h} \quad  \text{if $\phi = \int_{0}^{\cdot} h(s) \,{\rm d}s$  for some $ h \in L^2(0,T;\mathcal{H})$}, \notag \\
     0 \quad \text{otherwise}.
    \end{cases}
\end{equation}
Our main theorem on LDP is stated as follows.
\begin{thm}\label{thm:ldp}
Let the assumptions \ref{A1} - \ref{A3} hold true and $1<p<\infty$ be given. For any $\epsilon > 0$, let  $u_{\epsilon}$ be the unique strong solution of \eqref{eq:ldp}. Then $\{u_{\epsilon} \}_{\epsilon > 0}$ satisfies the LDP on $\mathcal{Z} = C([0,T];L^2(\R^d))$ with the good rate function ${\tt I}$ given by 
\begin{align} 
  {\tt I}(f)=
\displaystyle  \underset{\big\{h \in L^2(0,T; \mathcal{H}) : f =  \mathcal{G}_{0} \big(\int_0^{\cdot}h(s) \, {\rm d}s \big)\big\}}\inf  \bigg \{ \frac{1}{2} \int_0^T\|h(s)\|_{\mathcal{H}}^2 \, {\rm d}s\bigg \}\,, \notag
 \end{align}
 where infimum over an empty set is taken as $+ \infty$.
\end{thm}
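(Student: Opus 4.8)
The plan is to establish the Laplace principle (equivalently the LDP by the stated equivalence theorem) via the weak convergence approach, using the sufficient conditions \ref{C1}--\ref{C2} from \cite{MSZ} that are adapted to the situation where $S_M$ carries only the weak topology and the underlying Gelfand triple embedding is not compact. Since by the Yamada--Watanabe representation we have $u_\epsilon = \mathcal{G}_\epsilon(W)$ and the skeleton map $\mathcal{G}_0$ is well-defined by Theorem \ref{thm:skeleton}, it suffices to verify: \ref{C1} for every $M<\infty$, whenever $h_n \rightharpoonup h$ in $S_M$, one has $\mathcal{G}_0\!\left(\int_0^\cdot h_n(s)\,ds\right) \to \mathcal{G}_0\!\left(\int_0^\cdot h(s)\,ds\right)$ in $\mathcal{Z} = C([0,T];L^2(\R^d))$; and \ref{C2} a tightness/convergence statement for the controlled copies $\mathcal{G}_\epsilon\!\left(W + \tfrac1\epsilon\int_0^\cdot h_\epsilon(s)\,ds\right)$ with $h_\epsilon \in \mathcal{A}_M$, namely that if $h_\epsilon \rightharpoonup h$ (in distribution, on $S_M$ with the weak topology) then the controlled solutions converge in distribution in $\mathcal{Z}$ to $\mathcal{G}_0(\int_0^\cdot h\,ds)$. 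The good rate function then automatically takes the stated variational form, and its goodness follows from the uniform bound \eqref{esti:uniform-skeleton-original} together with the compactness of the sublevel sets of the $L^2(0,T;\mathcal{H})$-norm control cost.

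For \ref{C1}, I would argue as in the condition \ref{C2} analysis but in a purely deterministic setting. Write $v_n := u_{h_n}$, $v := u_h$. From \eqref{esti:uniform-skeleton-original} the sequence $\{v_n\}$ is bounded in $L^\infty(0,T;L^2(\R^d)) \cap L^q(0,T;\mathcal{Y})$, hence $\{\Delta_p(v_n)\}$ is bounded in $L^{q'}(0,T;\mathcal{Y}')$ (using the growth of the $p$-Laplace operator), and $\{\sigma(v_n)h_n\}$ is bounded in $L^2(0,T;L^2(\R^d))$ by \eqref{inq: growth-sigma} and the $S_M$-bound; thus $\{\partial_t v_n\}$ is bounded in $L^{q'}(0,T;\mathcal{Y}')$. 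Since the embedding $\mathcal{Y}\hookrightarrow L^2(\R^d)$ is not compact, I cannot use Aubin--Lions directly; instead the key tool is the localized time increment estimate (Lemma \ref{lem:time-increment-un-cond-c2}), which controls $\sup_{t}\|v_n(t+\delta)-v_n(t)\|$ on a fixed ball $B_R\subset\R^d$ uniformly in $n$, combined with tightness in space obtained from the a priori estimate \eqref{apriori-cond-c2} (the $\mathcal{Y}$-bound gives smallness of the mass outside large balls). Passing to a subsequence, $v_n \to v_*$ in $C([0,T];L^2(B_R))$ for each $R$ and, after a diagonal argument and using the spatial tightness, in $C([0,T];L^2(\R^d))$; simultaneously $v_n \rightharpoonup v_*$ in $L^q(0,T;\mathcal{Y})$ and $\Delta_p(v_n) \rightharpoonup \chi$ in $L^{q'}(0,T;\mathcal{Y}')$. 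The monotonicity of $-\Delta_p$ (a Minty-type argument using the strong $L^2$-convergence to identify $\chi = \Delta_p(v_*)$) together with the weak convergence $\sigma(v_n)h_n \rightharpoonup \sigma(v_*)h$ (here one uses the strong convergence of $v_n$ to upgrade $\sigma(v_n)\to\sigma(v_*)$ strongly as Hilbert--Schmidt operators, and pairs this against the weakly convergent $h_n$) shows that $v_*$ solves the skeleton equation with control $h$; by the uniqueness part of Theorem \ref{thm:skeleton}, $v_* = v = u_h$, and since the limit is identified uniquely the whole sequence converges. This yields \ref{C1}.

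For \ref{C2}, the scheme mirrors the above but now for the stochastic controlled equations $u_\epsilon^{h_\epsilon} = \mathcal{G}_\epsilon(W + \tfrac1\epsilon\int_0^\cdot h_\epsilon\,ds)$, which by Girsanov's theorem solve
\begin{equation*}
du_\epsilon^{h_\epsilon} - \Delta_p(u_\epsilon^{h_\epsilon})\,dt = \sigma(u_\epsilon^{h_\epsilon})h_\epsilon(t)\,dt + \epsilon\,\sigma(u_\epsilon^{h_\epsilon})\,dW(t),\quad u_\epsilon^{h_\epsilon}(0)=u_0.
\end{equation*}
Uniform moment estimates (obtained by Itô's formula applied to $\|u_\epsilon^{h_\epsilon}(t)\|_{L^2(\R^d)}^2$, the coercivity of $-\Delta_p$ on $\mathcal{Y}$, \eqref{inq: growth-sigma}, the $S_M$-bound and Gronwall) give boundedness in $L^2(\Omega;L^\infty(0,T;L^2(\R^d))) \cap L^q(\Omega\times(0,T);\mathcal{Y})$ uniformly in $\epsilon$; the stochastic term contributes an $O(\epsilon^2)$ correction that vanishes. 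Then, using the localized time increment estimate of Lemma \ref{lem:time-increment-un-cond-c2} for $u_\epsilon^{h_\epsilon}$, the spatial tightness from \eqref{apriori-cond-c2}, and the Burkholder--Davis--Gundy inequality to handle the $\epsilon\,\sigma\,dW$ term (which also tends to zero), I would show that the laws of $u_\epsilon^{h_\epsilon}$ are tight in $\mathcal{Z}$; by a Skorokhod/Jakubowski representation one passes to an a.s.-convergent subsequence, and the limit is identified — again by a monotonicity (Minty) argument together with strong $L^2$-convergence and the weak convergence $h_\epsilon\rightharpoonup h$ — as the solution $\mathcal{G}_0(\int_0^\cdot h\,ds)$ of the skeleton equation. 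Uniqueness (Theorem \ref{thm:skeleton}) forces convergence in distribution of the full sequence. The main obstacle throughout is exactly the failure of the compact embedding on the Gelfand triple: without it one must replace Aubin--Lions by the combination of the localized-in-space time-increment estimate and the spatial tightness coming from the $\mathcal{Y}$-norm control, and carefully use the strong $L^2$-convergence both to run the Minty argument for $\Delta_p$ and to pass to the limit in the bilinear term $\sigma(u)h$ against weakly convergent controls. The stochastic estimates (Itô applied in the $\mathcal{Y}\hookrightarrow L^2\hookrightarrow\mathcal{Y}'$ framework, BDG) are routine once the deterministic compactness substitute is in place.
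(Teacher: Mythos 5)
There is a genuine gap, and it starts with the two sufficient conditions you are verifying. You have effectively written down the \emph{original} Budhiraja--Dupuis pair (compactness of $K_M$; plus ``if $h_\epsilon\rightharpoonup h$ in distribution then the controlled stochastic solutions converge in distribution to $\mathcal{G}_0(\int_0^\cdot h\,ds)$''), whereas the paper deliberately uses the MSZ reformulation in which \ref{C1} is the \emph{stability in probability} statement $\rho\big(v_\eps,\bar u_\eps\big)\to 0$ with $v_\eps=\mathcal{G}_\eps\big(W+\tfrac1\eps\int_0^\cdot h_\eps\big)$ and $\bar u_\eps=\mathcal{G}_0\big(\int_0^\cdot h_\eps\big)$, both driven by the \emph{same} random control $h_\eps\in\mathcal{A}_M$, and \ref{C2} is the compactness of $K_M$. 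This choice is precisely what lets the paper avoid the Skorokhod-representation / Minty identification for the stochastic controlled equation that you outline for your ``\ref{C2}'': the paper's \ref{C1} reduces to applying It\^o's formula to $z_\eps=v_\eps-\bar u_\eps$, using monotonicity of $-\Delta_p$, Gronwall, and BDG, giving $\mathbb{E}\big[\sup_t\|z_\eps(t)\|_{L^2}^2\big]=O(\eps^2)$. No tightness, no Skorokhod, no Minty argument is needed on the stochastic side; the weak convergence of controls plays no role there at all.

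The deterministic compactness (the paper's \ref{C2}, which you label ``\ref{C1}'') is where the real work is, and your proposed route via subsequence extraction plus Minty does not go through. Two concrete obstacles: (i) Lemma~\ref{lem:time-increment-un-cond-c2} gives $\int_0^T\|\bar u_n(t)-\bar u_n(t(\delta))\|_{L^2(\R^d)}^2\,dt\le C\delta$, which is an $L^2$-in-time, \emph{global}-in-space increment bound — not a $\sup_t$ bound on a ball $B_R$ as you describe — so it cannot be fed into an Arzel\`a--Ascoli argument to get convergence in $C([0,T];L^2)$; and (ii) the $\mathcal{Y}$-norm bound \eqref{apriori-cond-c2} gives no control of mass at spatial infinity (functions with bounded $\mathcal{Y}$-norm need not decay), so there is no spatial tightness available, and local strong compactness plus a diagonal argument will not upgrade to strong $L^2(\R^d)$ convergence. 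This is exactly the failure of the Gelfand-triple compact embedding the paper emphasizes. Instead of extracting a limit, the paper estimates $X_n=\bar u_n-\bar u_{\bar h}$ \emph{directly}: the chain rule plus monotonicity gives $\sup_t\|X_n(t)\|_{L^2}^2\le 2\int_0^T|\langle\sigma(\bar u_{\bar h})(\bar h_n-\bar h),X_n\rangle|$, and this single inner product is split across the mesh $\{k\delta\}$ into four pieces ${\pmb C}_{1,n},\ldots,{\pmb C}_{4,n}$; the first three are $O(\sqrt{\delta})$ uniformly in $n$ thanks to the time-increment lemma and the $S_M$-bound, and the fourth tends to zero as $n\to\infty$ for each fixed $\delta$ because $\sigma(\bar u_{\bar h}(k\delta))$ is Hilbert--Schmidt (hence compact) and $\bar h_n\rightharpoonup\bar h$ weakly. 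Sending $n\to\infty$ then $\delta\to0$ closes the argument without ever producing a weakly convergent subsequence of $\bar u_n$ or running Minty.
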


\vspace{0.1cm}

\subsection{Transportation cost information inequality}
Let $\mathcal{P}({\tt X})$ be the set of all probability measures on $({\tt X},{\tt d})$, where  $({\tt X},{\tt d})$ is a given metric space equipped with the Borel $\sigma$-field $\mathcal{B}$.  For any $\mu, \nu \in \mathcal{P}({\tt X})$ and $r \in [1,\infty)$,  we define the $L^r$-Wasserstein distance between  $\mu$ and $\nu$, denoted by ${\tt W}_r(\nu, \mu) $,  as  (cf.~ \cite{villan} )
 \begin{equation} 
 {\tt W}_r(\nu, \mu) := \bigg [\inf \int_{{\tt X} \times {\tt X}} {\tt d}(x,y)^r\pi({\rm d}x,{\rm d}y) \bigg ]^{\frac{1}{r}} \\
 = \inf \bigg \{\big[\mathbb{E}({\tt d}(\mathcal{X},\mathcal{Y})^r)\big]^{\frac{1}{r}},~~~ law(\mathcal{X}) = \mu, ~~ law(\mathcal{Y}) = \nu \bigg \}, \notag
  \end{equation}
 where the infimum is taken over all the joint probability measures $\pi$ on ${\tt X} \times {\tt X}$ with marginals $\mu$ and $\nu$.
We say that $\nu$ is absolutely continuous with respect to $\mu$, denoted by $\nu  \ll  \mu$, if $\nu(A)=0$ for any  $A\in \mathcal{B}$ such that $\mu(A)=0$. The relative entropy (Kullback information) of $\nu$ with respect to $\mu$ is defined by 
 \begin{equation*}
     \mathcal{H}(\nu | \mu) = \begin{cases}
\displaystyle      \int_{{\tt X}} \log(\frac{{\rm d} \nu}{{\rm d} \mu}) {\rm d}\nu \quad  if \, \nu  \ll  \mu, \\ 
       +\infty \quad \quad \text{otherwise}. 
     \end{cases}
 \end{equation*}
 
\begin{defi}[Transportation cost information inequality] \label{defi:tci}
A measure $\mu$ satisfies the $L^r$-transportation cost information inequality if there exists a constant ${\tt C}>0$ such that for all probability measure $\nu$,
\begin{equation} \label{eq:reltci}
{\tt W}_r(\nu, \mu) \leq \sqrt{2{\tt C}\mathcal{H}(\nu|\mu)}.
\end{equation}
\end{defi} 
 We denote the relation in \eqref{eq:reltci} as $\mu \in T^{r}({\tt C})$. The case $r = 2, T^{2}({\tt C})$ is referred to as the quadratic TCI inequality. Thanks to H\"{o}lder's inequality, one can easily see that 
$$T^{q}({\tt C})\subseteq T^{r}({\tt C})~\text{ for $1\le r\le q$}$$
and hence the property $ T^{1}({\tt C})$ is weaker than $ T^{2}({\tt C})$.  We now ready to state the main result regarding the quadratic TCI inequality. 
\begin{thm} \label{thm:maintci} 
Let the assumptions \ref{A1}-\ref{A3} hold true. Moreover, in addition, $\sigma(\cdot)$ satisfies the following boundedness property: there exists a constant $\bar{\sigma}_b >0$ such that 
\begin{align}
\|\sigma(u)\|_{\mathcal{L}_2(\mathcal{H}, L^2(\R^d))}\le \bar{\sigma}_b, \quad \forall~u \in L^2(\R^d)\,. \label{cond:bound-sigma}
\end{align}
 Then the law $\mu$ of the solution of \eqref{eq:spde} satisfies the quadratic TCI inequality on ${\tt X}:= C([0,T];L^2(\R^d))$ i.e., $\mu \in T^{2}({\tt C})$, where ${\tt C}:=2\bar{\sigma}_b^2 \exp\big\{2T\big(1 + 33\,c_\sigma^2\big)\big\} $. In particular, $\mu$  concentrates as a Borel measure on $C([0,T];L^2(\R^d))$.
\end{thm}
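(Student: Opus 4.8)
The plan is to argue via the Girsanov transformation together with the $L^2$-uniqueness (energy) estimate for \eqref{eq:spde}. Let $\mu$ denote the law on ${\tt X}=C([0,T];L^2(\R^d))$ of the strong solution $u$ of \eqref{eq:spde}, and let $\nu\in\mathcal{P}({\tt X})$ be arbitrary; we may assume $\nu\ll\mu$ and $\mathcal{H}(\nu|\mu)<\infty$, since otherwise \eqref{eq:reltci} is vacuous. First I would lift $\nu$ to the underlying space: setting $\frac{d\mathbb{Q}}{d\mathbb{P}}:=\frac{d\nu}{d\mu}(u)$ defines a probability measure $\mathbb{Q}\ll\mathbb{P}$ on $\Omega$ with $\mathbb{Q}\circ u^{-1}=\nu$ and, by a direct computation, $\mathcal{H}(\mathbb{Q}|\mathbb{P})=\mathcal{H}(\nu|\mu)$. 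Applying the martingale representation theorem to the $\mathbb{P}$-martingale $t\mapsto\mathbb{E}_{\mathbb{P}}[\frac{d\mathbb{Q}}{d\mathbb{P}}\mid\mathcal{F}_t]$ and the stochastic-logarithm (Dol\'eans--Dade) formula produces an $\{\mathcal{F}_t\}$-predictable $\mathcal{H}$-valued process $h$ with $\frac{d\mathbb{Q}}{d\mathbb{P}}=\exp\big(\int_0^T h(s)\,dW(s)-\frac12\int_0^T\|h(s)\|_{\mathcal{H}}^2\,ds\big)$, and, by the classical entropy identity on Wiener space, $\mathbb{E}_{\mathbb{Q}}\big[\int_0^T\|h(s)\|_{\mathcal{H}}^2\,ds\big]=2\mathcal{H}(\mathbb{Q}|\mathbb{P})=2\mathcal{H}(\nu|\mu)<\infty$. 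By Girsanov's theorem $\tilde W(t):=W(t)-\int_0^t h(s)\,ds$ is a cylindrical Wiener process on $\mathcal{H}$ under $\mathbb{Q}$, and, rewriting \eqref{eq:spde} with $dW=d\tilde W+h\,dt$, the process $u$ satisfies $\mathbb{Q}$-a.s.
\[
du(t)-\Delta_p u(t)\,dt=\sigma(u(t))\,d\tilde W(t)+\sigma(u(t))\,h(t)\,dt,\qquad u(0)=u_0 .
\]

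Next, on the stochastic basis $(\Omega,\mathcal{F},\mathbb{Q},\{\mathcal{F}_t\})$ driven by $\tilde W$, let $v$ be the unique strong solution of \eqref{eq:spde} in the sense of Definition \ref{defi:strong-solution-spde} (its existence and uniqueness being Theorem \ref{thm:exis-uni-spde}). Since the law of that strong solution depends only on $u_0$, $\sigma$ and $\Delta_p$, and not on the stochastic basis or the particular cylindrical Wiener process (pathwise uniqueness together with Yamada--Watanabe), we have $\mathbb{Q}\circ v^{-1}=\mu$. Hence $(u,v)$ is a coupling of $(\nu,\mu)$ on $(\Omega,\mathcal{F},\mathbb{Q})$, and the definition of the $L^2$-Wasserstein distance gives
\[
{\tt W}_2(\nu,\mu)^2\le\mathbb{E}_{\mathbb{Q}}\Big[\sup_{t\in[0,T]}\|u(t)-v(t)\|_{L^2(\R^d)}^2\Big].
\]
It therefore suffices to bound the right-hand side by ${\tt C}\,\mathbb{E}_{\mathbb{Q}}\big[\int_0^T\|h(s)\|_{\mathcal{H}}^2\,ds\big]=2{\tt C}\,\mathcal{H}(\nu|\mu)$.

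For this, put $w:=u-v$, so $w(0)=0$ and $dw=(\Delta_p u-\Delta_p v)\,dt+\sigma(u)\,h\,dt+(\sigma(u)-\sigma(v))\,d\tilde W$. Applying It\^o's formula for $\|\cdot\|_{L^2(\R^d)}^2$ in the Gelfand triple $\mathcal{Y}\hookrightarrow L^2(\R^d)\hookrightarrow\mathcal{Y}^\prime$: the monotonicity of $-\Delta_p$ makes $\langle\Delta_p u-\Delta_p v,\,w\rangle\le0$, so that term is discarded; the Hilbert--Schmidt term is bounded by $c_\sigma^2\|w\|_{L^2(\R^d)}^2$ via \ref{A2}; and the control term is controlled, crucially using the boundedness assumption \eqref{cond:bound-sigma}, by $2(\sigma(u)h,w)_{L^2(\R^d)}\le2\bar\sigma_b\|h\|_{\mathcal{H}}\|w\|_{L^2(\R^d)}\le\|w\|_{L^2(\R^d)}^2+\bar\sigma_b^2\|h\|_{\mathcal{H}}^2$. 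Taking $\sup_{s\le t}$ and $\mathbb{E}_{\mathbb{Q}}$, estimating the stochastic integral by the Burkholder--Davis--Gundy inequality (its quadratic variation being dominated by $\int_0^t c_\sigma^2\|w(s)\|_{L^2(\R^d)}^4\,ds$) and then Young's inequality to absorb $\frac12\mathbb{E}_{\mathbb{Q}}[\sup_{s\le t}\|w(s)\|_{L^2(\R^d)}^2]$ on the left, one arrives at
\[
\mathbb{E}_{\mathbb{Q}}\Big[\sup_{s\le t}\|w(s)\|_{L^2(\R^d)}^2\Big]\le 2(1+33c_\sigma^2)\int_0^t\mathbb{E}_{\mathbb{Q}}\Big[\sup_{r\le s}\|w(r)\|_{L^2(\R^d)}^2\Big]ds+2\bar\sigma_b^2\,\mathbb{E}_{\mathbb{Q}}\Big[\int_0^T\|h(s)\|_{\mathcal{H}}^2\,ds\Big],
\]
the numerical constant $33$ arising from the Lipschitz term together with the Burkholder--Davis--Gundy constant. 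Gr\"onwall's lemma then yields ${\tt W}_2(\nu,\mu)^2\le{\tt C}\,\mathbb{E}_{\mathbb{Q}}[\int_0^T\|h(s)\|_{\mathcal{H}}^2\,ds]=2{\tt C}\,\mathcal{H}(\nu|\mu)$ with ${\tt C}=2\bar\sigma_b^2\exp\{2T(1+33c_\sigma^2)\}$, i.e.\ $\mu\in T^2({\tt C})$; the concentration statement then follows from $T^2({\tt C})\subseteq T^1({\tt C})$ and the Marton-type bound $\alpha(r)\le e^{-r^2/(8{\tt C})}$ recalled in the introduction.

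The step I expect to be the main obstacle is making this $L^2$ estimate under $\mathbb{Q}$ fully rigorous. One must first verify that, under the new measure, $u$ is a bona fide solution of the controlled equation possessing, $\nu$-a.s., the same path regularity ($C([0,T];L^2(\R^d))\cap L^q(0,T;\mathcal{Y})$) as $\mu$-a.s., so that It\^o's formula in the Gelfand triple is legitimate; and, more seriously, one cannot pass to $\mathbb{E}_{\mathbb{Q}}$ and run Gr\"onwall directly because $\mathbb{E}_{\mathbb{Q}}[\sup_{t}\|u(t)\|_{L^2(\R^d)}^2]$ need not be finite (the density $\frac{d\nu}{d\mu}$ being unbounded), so the estimate has to be carried out along a localizing sequence of stopping times and finiteness recovered at the end by Fatou's lemma, using $\mathbb{E}_{\mathbb{Q}}[\int_0^T\|h\|_{\mathcal{H}}^2\,ds]=2\mathcal{H}(\nu|\mu)<\infty$. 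It is exactly here that the boundedness hypothesis \eqref{cond:bound-sigma} is indispensable: under the mere linear growth \ref{A3} the term $2(\sigma(u)h,w)_{L^2(\R^d)}$ would carry an uncontrolled factor $\|u\|_{L^2(\R^d)}$ that cannot be absorbed into Gr\"onwall's lemma.
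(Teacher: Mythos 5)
Your proposal follows essentially the same route as the paper: the Girsanov change of measure with Radon--Nikodym density $\frac{d\nu}{d\mu}(u)$, the entropy identity $\mathcal{H}(\nu|\mu)=\tfrac12\mathbb{E}_*\int_0^T\|h\|_{\mathcal{H}}^2\,ds$, the coupling by pairing the original solution (which under the new measure solves the controlled equation) with a fresh solution of the uncontrolled equation driven by the tilted Wiener process, and then an It\^o/monotonicity/BDG/Gr\"onwall estimate on the difference yielding exactly the constant ${\tt C}=2\bar\sigma_b^2\exp\{2T(1+33c_\sigma^2)\}$. The concern you raise about finiteness of $\mathbb{E}_{\mathbb{Q}}[\sup_t\|w(t)\|_{L^2}^2]$ before Gr\"onwall is a fair point about rigor that the paper passes over silently; as you yourself note, the boundedness \eqref{cond:bound-sigma} of $\sigma$ together with $\mathbb{E}_{\mathbb{Q}}\int_0^T\|h\|_{\mathcal{H}}^2\,ds=2\mathcal{H}(\nu|\mu)<\infty$ gives an a priori energy bound on both processes under the new measure, so either a direct a priori estimate or your stopping-time/Fatou argument closes the gap.
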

%%%%%%%%%%%%%%%%%%

\section{Wellposedness of Skeleton equation}\label{sec:existence-uni-skeleton}
This section is devoted to prove existence and uniqueness of the solution of  skeleton equation \eqref{eq:skeleton}  i.e.,  Theorem \ref{thm:skeleton}.
\subsection{Uniqueness of solutions for \eqref{eq:skeleton}} \label{subsec:uni-skeleton}
For a fixed $h\in L^2(0,T;\mathcal{H})$, let $u_1$ and $u_2$ be two solutions of the skeleton equation \eqref{eq:skeleton}. Then using the chain-rule formula, the fundamental inequality
\begin{align}
\langle |\eta|^{p-2} \eta - |\zeta|^{p-2}\zeta, \eta-\zeta \rangle \ge 0,\quad \forall~ \eta, \zeta \in \R^d;~~~1<p< \infty\,, \label{inq:monotone}
\end{align}
the Cauchy-Schwartz inequality, Young's inequality and the assumptions \ref{A2}-\ref{A3}, we have, for $u(t):=u_1(t)-u_2(t)$,
\begin{align}
\|u(t)\|_{L^2(\R^d)}^2 & \le C \int_0^t \|u(s)\|_{L^2(\R^d)}^2\,ds + C \int_0^t \|\sigma(u_1)-\sigma(u_2)\|_{\mathcal{L}_2(\mathcal{H}, L^2(\R^d))}^2 \|h(s)\|_{\mathcal{H}}^2\,ds \notag \\
& \le C \int_0^t \big( 1 + \|h(s)\|_{\mathcal{H}}^2\big)\|u(s)\|_{L^2(\R^d)}^2\,ds\,. \notag
\end{align}
Hence, in view of Gronwall's inequality and the fact that $h\in L^2(0,T; \mathcal{H})$, we see that $u_1(t)=u_2(t)$ for all $t\in [0,T]$ and a.e. in $\R^d$. Hence the solution to \eqref{eq:skeleton} is unique. 

\subsection{Well-posedness of auxiliary skeleton equation }\label{subsec:existence-skeleton-bounded-control}
For any $h\in L^\infty(0,T; \mathcal{H})$, consider the following  auxiliary skeleton equation
\begin{equation}\label{eq:skeleton-auxi}
\begin{aligned}
&du-{\rm div}_x(|\nabla u|^{p-2}\nabla u) dt =  \sigma(\rho) h(t)\,dt; \quad 
u(0,x)=u_0(x),~~x\in \R^d\,,
\end{aligned}
\end{equation}
where $\rho \in L^2(0,T; L^2(\R^d))$ is fixed. 
We prove that \eqref{eq:skeleton-auxi} has a unique solution. As mentioned earlier, one cannot apply generalized monotonicity method 
\cite{ren-2007}.  Taking primary motivation from \cite{schmitz-23}, we use  a semi-implicit Euler Maruyama scheme. To do so, we introduce the semi-implicit time discretization as follows. For $N \in \mathbb{N}$, let $0 = t_0 < t_1 < ... < t_N = T$ be a uniform partition of $[0,T]$ with $\tau := \frac{T}{N} = t_{k+1}-t_k$ for all $k=0,1,...,N-1$. Consider the semi-implicit Euler Maruyama scheme of \eqref{eq:skeleton-auxi}:  for $k=0,1,...,N-1$,
 \begin{equation}\label{eq:discrete}
u_{k+1} - u_{k} - \tau {\rm div}( |\nabla u_{k+1}|^{p-2}\nabla u_{k+1}\big) =\int_{t_k}^{t_{k+1}} {\tt A}(s)\,ds
\end{equation}
where ${\tt A}(s):=  \sigma(\rho(s))h(s)$. 
Note that, 
 the operator 
 \begin{align*}
 \mathcal{A}_\tau:& \mathcal{Y} \goto \mathcal{Y}^\prime \\
 &\nu \mapsto \nu -\tau \Delta_p(\nu)
 \end{align*}
 is hemicontinuous, strict monotone, coercieve and bounded; see \cite{schmitz-23}. Moreover, $\mathcal{A}_\tau^{-1}: \mathcal{Y}^\prime \goto \mathcal{Y}$ is demi-continuous. Since for each $k$, $F_k:= u_k + \int_{t_k}^{t_{k+1}}{\tt A}(s)\,ds \in \mathcal{Y}^\prime$, by Minty-Browder theorem, there exists a unique element $u_{k+1}\in \mathcal{Y}$ such that $\mathcal{A}_\tau (u_{k+1})=F_k$. In other words, there exists unique $u_{k+1}\in \mathcal{Y}$ such that \eqref{eq:discrete} holds. 
\begin{defi}
  For $N \in \mathbb{N}$, $\tau > 0$, and $t\in [0,T]$, we define the right-continuous step function
          \[ u_{\tau}^r(t) = \sum_{k=0}^{N-1} u_{k+1} \chi_{[t_k,t_{k+1})} (t),~~u_\tau^r(T)=u_N, \]
            the left-continuous step function
          \[u_{\tau}^l(t) = \sum_{k=0}^{N-1} u_{k} \chi_{(t_k,t_{k+1}]} (t),~~u_\tau^l(0)=u_0, \]
        and the piecewise affine functions
         \begin{align*}
          & u_{\tau}(t) = \sum_{k=0}^{N-1} \left (\frac{u_{k+1}-u_{k}}{\tau} (t-t_k) + u_{k}  \right ) \chi_{[t_k,t_{k+1})} (t),~~u_{\tau}(T) =u_N, \\
           & \Phi_\tau(t)= \sum_{k=0}^{N-1} \left (\frac{\Phi_{k+1}-\Phi_{k}}{\tau} (t-t_k) + \Phi_{k}  \right ) \chi_{[t_k,t_{k+1})} (t),~~\Phi_{\tau}(T) =\Phi_N,
 \end{align*}
 where $\Phi_k=\int_0^{t_k} {\tt A}(s)\,ds$. 
\end{defi}
\subsubsection{\bf A-priori estimate for $\{u_\tau\}$}
 Taking test function $v=u_{k+1}$ in \eqref{eq:discrete}, and using the Cauchy-Schwartz inequality, Young's inequality, and the identity
 \begin{align}
(x-y)x=\frac{1}{2}\big\{ x^2 -y^2 + (x-y)^2\big\},~~\forall~x,y \in \R\,, \label{identity-1}
 \end{align}
 we get
 \begin{align}
& \frac{1}{2} \Big\{ \|u_{k+1}\|_{L^2(\R^d)}^2 - \|u_k\|_{L^2(\R^d)}^2 + \|u_{k+1}-u_k\|_{L^2(\R^d)}^2\Big\} + \tau \|\nabla u_{k+1}\|_{L^p(\R^d)^d}^p \notag \\
& \le \frac{1}{4}\|u_{k+1}-u_k\|_{L^2(\R^d)}^2 + \frac{\tau}{4} \|u_k\|_{L^2(\R^d)}^2 + 2 \int_{t_k}^{t_{k+1}} \|{\tt A}(s)\|_{L^2(\R^d)}^2\,ds\,. \label{esti-1-auxi}
 \end{align}
Discarding the non-negative terms, and then taking sum over $k=0,1,\ldots, n-1$ for fixed $n\in \{1,2,\ldots, N\}$, we obtain
\begin{align}
\|u_n\|_{L^2(\R^d)}^2 \le \|u_0\|_{L^2(\R^d)}^2 + \frac{\tau}{2} \sum_{k=0}^{n-1} \|u_k\|_{L^2(\R^d)}^2 + 4 \int_0^T \|{\tt A}(s)\|_{L^2(\R^d)}^2\,ds\,.\notag
\end{align}
Using the discrete Gronwall's lemma and the fact that $\int_0^T \|{\tt A}(s)\|_{L^2(\R^d)}^2\,ds < + \infty$, we conclude that
\begin{align}
\sup_{0\le n\le N} \|u_n\|_{L^2(\R^d)}^2 \le C \label{esti:uni-discrete-l2}
\end{align}
for some constant $C>0$, independent of $\tau$. Furthermore, by using \eqref{esti:uni-discrete-l2} in \eqref{esti-1-auxi}, we see that
\begin{equation}\label{esti:uni-discrete-diff-lp}
\begin{aligned}
\tau \sum_{k=0}^{N-1} \|\nabla u_{k+1}\|_{L^p(\R^d)^d}^p & \le \frac{1}{2} \|u_0\|_{L^2(\R^d)}^2 + \frac{\tau}{4} \sum_{k=0}^{N-1}\|u_k\|_{L^2(\R^d)}^2 + 2 \int_0^T \|{\tt A}(s)\|_{L^2(\R^d)}^2\,ds \le C\,, \\
\sum_{k=0}^{N-1}\|u_{k+1}-u_k\|_{L^2(\R^d)}^2 &\le 2 \|u_0\|_{L^2(\R^d)}^2 + \tau \sum_{k=0}^{N-1}\|u_k\|_{L^2(\R^d)}^2 + 8 \int_0^T  \|{\tt A}(s)\|_{L^2(\R^d)}^2\,ds \le C\,.
\end{aligned}
\end{equation}
In view of \eqref{esti:uni-discrete-l2}, \eqref{esti:uni-discrete-diff-lp}, and the definition of $u_\tau^r, u_\tau^l$ and $u_\tau$, one may easily see that
\begin{equation}\label{esti:apriori-cont-aux}
\begin{aligned}
& \sup_{t\in [0,T]}\|u_\tau^r(t)\|_{L^2(\R^d)}^2=\sup_{t\in [0,T]}\|u_\tau^l(t)\|_{L^2(\R^d)}^2=\sup_{t\in [0,T]}\|u_\tau(t)\|_{L^2(\R^d)}^2 \le C\,, \\
& \int_0^T \|\nabla u_\tau^r(t)\|_{L^p(\R^d)^d}^p\,dt \le  \tau \sum_{k=0}^{N-1} \|\nabla u_{k+1}\|_{L^p(\R^d)^d}^p \le C\,, \\
& \int_0^T \|u_\tau ^r(t)- u_{\tau}^l(t)\|_{L^2(\R^d)}^2\,dt = \sum_{k=0}^{N-1} \int_{t_k}^{t_{k+1}} \|u_{k+1}-u_k\|_{L^2(\R^d)}^2\,dt \le C \tau\,, \\
& \int_0^T \|u_\tau ^r(t)- u_{\tau}^l(t)\|_{L^2(\R^d)}^2\,dt = \sum_{k=0}^{N-1} \int_{t_k}^{t_{k+1}} \|\frac{u_{k+1}-u_k}{\tau}(t-t_k)\|_{L^2(\R^d)}^2\,dt \le C \tau\,.
\end{aligned}
\end{equation}
\subsubsection{\bf Convergence analysis }
 Thanks to \eqref{esti:apriori-cont-aux}, there exists $u_* \in L^2(0,T; L^2(\R^d))\cap L^q(0,T; \mathcal{Y})$ with $q=\min\{2,p\}$ such that as $\tau\goto 0$
 \begin{align}
u_\tau^r \rightharpoonup u_*,~~u_\tau^l \rightharpoonup u_*,~~u_\tau \rightharpoonup u_* ~~\text{in}~~L^2(0,T; L^2(\R^d))\,. \label{conv-weak-l2}
 \end{align}
 Observe that, for any $t\in [t_k, t_{k+1})$ and $k\in \{0,1, \ldots, N-1\}$
 \begin{align*}
& \Big\| \Phi_\tau(t)- \int_0^t {\tt A}(s)\,ds \Big\|_{L^2(\R^d)}^2 \\
& \le 2 \Big( \Big\| \int_{t_k}^t {\tt A}(s)\,ds\Big\|_{L^2(\R^d)}^2 + \Big\| \int_{t_k}^{t_{k+1}} {\tt A}(s)\,ds\Big\|_{L^2(\R^d)}^2\Big) \le 2 \tau \int_{t_k}^{t_{k+1}}\|{\tt A}(s)\|_{L^2(\R^d)}^2\,ds\,.
 \end{align*}
 Thus, one has
 \begin{align*}
\int_0^T \Big\| \Phi_\tau(t)- \int_0^t {\tt A}(s)\,ds \Big\|_{L^2(\R^d)}^2\,dt & \le \sum_{k=0}^{N-1} \int_{t_k}^{t_{k+1}} \Big\| \Phi_\tau(t)- \int_0^t {\tt A}(s)\,ds \Big\|_{L^2(\R^d)}^2\,dt \\
& \le 2 \tau^2 \int_0^T \|{\tt A}(s)\|_{L^2(\R^d)}^2\,ds \le C \tau\,.
 \end{align*}
 In other words,
 \begin{align}
\Phi_\tau \goto \int_0^{\cdot} {\tt A}(s)\,ds ~~~\text{in}~~L^2(0,T; L^2(\R^d))~~\text{as}~~\tau\goto 0\,. \label{conv:strong-l2-aux}
 \end{align}
 Thanks to H\"{o}lder's inequality, we have, for any $t\in [t_k, t_{k+1})$ and $k\in \{ 0,1, \ldots, N-1\},$
 \begin{align*}
\Big\|\frac{\partial}{\partial t}\big( u_\tau(t)- \Phi_\tau(t)\big)\Big\|_{\mathcal{Y}^\prime}
\le \sup_{\psi \in \mathcal{Y},~\|\psi\|_{\mathcal{Y}}\le 1} \int_{\R^d}|\nabla u_{k+1}|^{p-1}|\nabla \psi(x)|\,dx \le \|\nabla u_{k+1}\|_{L^p(\R^d)^d}^{p-1}\,.
 \end{align*}
 Thus, using \eqref{esti:apriori-cont-aux} and the above calculation, we see that
 \begin{align}
\int_0^T \Big\|\frac{\partial}{\partial t}\big( u_\tau(t)- \Phi_\tau(t)\big)\Big\|_{\mathcal{Y}^\prime}^{p^\prime}\,dt \le \sum_{k=0}^{N-1} \int_{t_k}^{t_{k+1}} \|\nabla u_{k+1}\|_{L^p(\R^d)^d}^{p}\le C\,. \label{esti:boundedness-time-deri}
 \end{align}
Hence, in view of \eqref{esti:boundedness-time-deri}, there exists a $Y\in L^{p^\prime}(0,T;\mathcal{Y}^\prime)$ such that
$\frac{\partial}{\partial t}\big( u_\tau(t)- \Phi_\tau(t)\big) \rightharpoonup Y$ in $L^{p^\prime}(0,T;\mathcal{Y}^\prime)$. Thanks to integration by parts formula and the convergence results in \eqref{conv-weak-l2} and \eqref{conv:strong-l2-aux}, we conclude that
\begin{align}
\frac{\partial}{\partial t}\big( u_\tau(t)- \Phi_\tau(t)\big) \rightharpoonup 
\frac{\partial}{\partial t}\big( u_\tau(t)- \int_0^t {\tt A}(s)\,ds\big)~~\text{in}~~L^{p^\prime}(0,T;\mathcal{Y}^\prime)~~\text{as}~~\tau \goto 0\,. \label{conv:weak-time-deri}
\end{align}
Since $\{\nabla u_\tau^r\}$ is bounded in $L^p(0,T; L^p(\R^d)^d)$, it is easy to see that the sequence 
$\{ |\nabla u_\tau^r|^{p-2}\nabla u_\tau^r\}$ is bounded in $L^{p^\prime}(0,T;L^{p^\prime}(\R^d)^d)$ and hence there exists a not relabeled subsequence such that
\begin{align}
|\nabla u_\tau^r|^{p-2}\nabla u_\tau^r \rightharpoonup  G ~~\text{in}~~L^{p^\prime}(0,T;L^{p^\prime}(\R^d)^d)~~\text{as}~~\tau \goto 0 \label{conv:weak-lp}
\end{align}
for some ${\tt G}\in L^{p^\prime}(0,T;L^{p^\prime}(\R^d)^d)$. 
%%%%%%%%%%%%%%%%%%%%%%%%%%%%%%%%%%%%%%%
\subsubsection{\bf Well-posedness of solution of auxiliary skeleton equation:}  Following the proof of \cite[Proposition $2.11$]{schmitz-23} along with \eqref{conv:weak-time-deri}, \eqref{conv:weak-lp} and the fact that 
\begin{align*}
\frac{\partial}{\partial t}\big( u_\tau(t)- \Phi_\tau(t)\big)- {\rm div}_x \big( |\nabla u_\tau^r|^{p-2}\nabla u_\tau^r\big)=0~~\text{in}~~L^{p^\prime}(0,T; \mathcal{Y}^\prime),
\end{align*}
we infer that $u_* \in C[(0,T]; L^2(\R^d))$ and satisfies the following equation: for all $t\in [0,T]$
\begin{align}
u_*(t)-u_0  - \int_0^t {\rm div}_x {\tt G}\,ds= \int_0^t {\tt A}(s)\,ds\,. \label{eq:limit-skeleton-aux}
\end{align}
We now identify the limit ${\tt G}$. We take a test function $u_{k+1}$ in \eqref{eq:discrete}, use the identity \eqref{identity-1} and sum over $k=0,1,\ldots, N-1$ along with the definition of $u_\tau$ and $u_\tau^r$, we have
\begin{align}
\frac{1}{2}\|u_\tau(T)\|_{L^2(\R^d)}^2 + \int_0^T \int_{\R^d} |\nabla u_\tau^r|^{p-2} \nabla u_\tau^r \cdot \nabla u_\tau^r \,dx\,dt - \int_0^T \int_{\R^d} {\tt A}(s) u_\tau^r(s)\,dx\,ds \le \frac{1}{2} \|u_0\|_{L^2(\R^d)}^2\,. \label{inq:weak-limit-identification-1}
\end{align}
On the other hand, from \eqref{eq:limit-skeleton-aux}, we have
\begin{align}
\frac{1}{2}\|u_*(T)\|_{L^2(\R^d)}^2 + \int_0^T \int_{\R^d} {\tt G} \cdot \nabla u_*(s) \,dx\,dt - \int_0^T \int_{\R^d} {\tt A}(s) u_*(s)\,dx\,ds = \frac{1}{2} \|u_0\|_{L^2(\R^d)}^2\,. \label{inq:weak-limit-identification-2}
\end{align}
Subtracting \eqref{inq:weak-limit-identification-2} from \eqref{inq:weak-limit-identification-1}, we get 
\begin{align}
& \frac{1}{2}\big\{ \|u_\tau(T)\|_{L^2(\R^d)}^2-\|u_*(T)\|_{L^2(\R^d)}^2 \big\} + \Big\{ 
\int_0^T \int_{\R^d} |\nabla u_\tau^r|^{p-2} \nabla u_\tau^r \cdot \nabla u_\tau^r \,dx\,dt -
\int_0^T \int_{\R^d} {\tt G} \cdot \nabla u_*(s) \,dx\,dt \Big\} \notag \\
& \le 
\Big\{ \int_0^T \int_{\R^d} {\tt A}(s) u_*(s)\,dx\,ds - \int_0^T \int_{\R^d} {\tt A}(s) u_*(s)\,dx\,ds\Big\}\,. \label{inq:weak-limit-identification-3}
\end{align}
Following a similar argument as invoked in the proof of \cite[Lemma 2.12]{schmitz-23}, we infer that $u_\tau(T)\rightharpoonup u_*(T)$ in $L^2(\R^d)$ and hence by the weak lower semicontinuity of the norm, we get
\begin{align}
\liminf_{\tau \goto 0} \big\{ \|u_\tau(T)\|_{L^2(\R^d)}^2-\|u_*(T)\|_{L^2(\R^d)}^2 \big\} \ge 0\,. \label{conv:weak-limit-identification-1}
\end{align}
Again, thanks to \eqref{conv:strong-l2-aux}, it is easy to see that
\begin{align}
\lim_{\tau\goto 0}\int_0^T \int_{\R^d} {\tt A}(s) u_*(s)\,dx\,ds =\int_0^T \int_{\R^d} {\tt A}(s) u_*(s)\,dx\,ds\,. \label{conv:weak-limit-identification-2}
\end{align}
Using \eqref{conv:weak-limit-identification-1} and \eqref{conv:weak-limit-identification-2} in \eqref{inq:weak-limit-identification-3}, we obtain
\begin{align}
\limsup_{\tau\goto 0} \int_0^T \int_{\R^d} |\nabla u_\tau^r|^{p-2} \nabla u_\tau^r \cdot \nabla u_\tau^r \,dx\,dt \le \int_0^T \int_{\R^d} {\tt G} \cdot \nabla u_*(s) \,dx\,dt\,.
\label{inq:weak-limit-identification-4}
\end{align}
Let ${\tt v}\in L^p(0,T; L^p(\R^d)^d)$. Thanks to \eqref{inq:monotone} and \eqref{inq:weak-limit-identification-4}, and the fact that $\nabla u_\tau^r \rightharpoonup \nabla u_*$ in $L^p(0,T; L^p(\R^d)^d)$, one has
\begin{align*}
0 &\le \limsup_{\tau\goto 0} \int_{0}^T\int_{\R^d} \big(|\nabla u_\tau^r|^{p-2}\nabla u_\tau^r -
|{\tt v}|^{p-2}{\tt v}\big) \cdot \big( \nabla u_\tau^r - {\tt v}\big)\,dx\,dt \\
& \le \int_0^T\int_{\R^d} {\tt G}\cdot \nabla u_*\,dx\,ds - \int_0^T \int_{\R^d} |{\tt v}|^{p-2}{\tt v} \cdot \nabla u_*\,dx\,ds - \int_0^T \int_{\R^d} \big( {\tt G}-|{\tt v}|^{p-2}{\tt v} \big) \cdot {\tt v}\,dx\,ds \\
&= \int_0^T\int_{\R^d} \big( {\tt G}-|{\tt v}|^{p-2}{\tt v} \big) \cdot(\nabla u_*- {\tt v})\,dx\,ds\,.
\end{align*}
Taking ${\tt v}= \nabla u_* + \lambda \psi $ with $\lambda \in \R$ and $\psi \in L^{p}(0,T; L^p(\R^d)^d)$ in the above expression, one can easily get that
$$ {\tt G}= |\nabla u_*|^{p-2}\nabla u_* ~~~\text{in}~~L^{p^\prime}(0,T; L^{p^\prime}(\R^d)^d)\,.$$
In other words, we conclude that $u_*$ is a solution of \eqref{eq:skeleton-auxi} for $h\in L^\infty(0,T;\mathcal{H})$. Moreover, similar argument as invoked in Subsection \ref{subsec:uni-skeleton} reveals that solution of \eqref{eq:skeleton-auxi} is unique. 
%%%%%%%%%%%%%%%%%%%%%%%%%%%%%%%%%%%%%%%
\subsection{Well-posedness result for equation \eqref{eq:skeleton} in case of $h\in L^\infty(0,T; \mathcal{H})$ }
 In this subsection, we show that equation \eqref{eq:skeleton} has a unique solution in the case $h\in L^\infty(0,T; \mathcal{H})$. In view of Subsection \ref{subsec:existence-skeleton-bounded-control}, the solution operator
\begin{align*}
\mathcal{S}:& L^2(0,T; L^2(\R^d))\goto L^2(0,T; L^2(\R^d)) \\
& \rho \mapsto \mathcal{S}(\rho)=u_{\rho}\,,
\end{align*}
where $u_\rho \in L^2(0,T; L^2(\R^d))\cap L^q(0,T; \mathcal{Y})$ is the unique solution of \eqref{eq:skeleton-auxi}, is well-defined. For any $\rho_1, \rho_2\in L^2(0,T; L^2(\R^d))$, let $u:= \mathcal{S}(\rho_1)-\mathcal{S}(\rho_2)$. Then using the chain-rule, the fundamental inequality \eqref{identity-1}, the Cauchy-Schwartz inequality, Young's inequality and \eqref{inq:lip-sigma}, we get
\begin{align*}
\frac{1}{2}\|u(t)\|_{L^2(\R^d)}^2 & \le \int_0^t \|\sigma(\rho_1)-\sigma(\rho_2)\|_{\mathcal{L}_2(\mathcal{H}, L^2(\R^d))}^2\|h(t)\|_{\mathcal{H}}^2\,ds \\
& \le  c_\sigma^2 \|h\|_{L^\infty(0,T;\mathcal{H})}^2\, \int_0^t \|\rho_1-\rho_2\|_{L^2(\R^d)}^2\,ds\,. 
\end{align*}
Thus, an application of integration by parts formula reveals that for any ${\tt M}:=2  c_\sigma^2 \|h\|_{L^\infty(0,T; \mathcal{H})}^2 < \alpha$,
\begin{align}
& \int_0^T e^{-\alpha t}\| (\mathcal{S}(\rho_1)-\mathcal{S}(\rho_2))(t)\|_{L^2(\R^d)}^2\,dt \notag \\
& \le {\tt M} \int_0^T e^{-\alpha t} \Big( \int_0^t \|\rho_1-\rho_2\|_{L^2(\R^d)}^2\,ds\Big)\,dt \notag 
\\
&= \frac{{\tt M}}{\alpha} \int_0^T e^{-\alpha t} \|\rho_1(t)-\rho_2(t)\|_{L^2(\R^d)}^2\,dt -  \frac{{\tt M}}{\alpha} e^{-\alpha T} \int_0^T  \|\rho_1(t)-\rho_2(t)\|_{L^2(\R^d)}^2\,dt \notag  \\
& < \int_0^T e^{-\alpha t} \|\rho_1(t)-\rho_2(t)\|_{L^2(\R^d)}^2\,dt\,. \label{inq:contraction}
\end{align}
Consider the space
$$ \mathbb{K}:=\big\{ u: [0,T]\goto L^2(\R^d):~~\int_0^T \|u(t)\|_{L^2(\R^d)}^2\, e^{-\alpha t}\,dt < + \infty\big\}.$$
 Then the space $\mathbb{K}$ endowed with the norm 
$$\|u\|_{\mathbb{K}}^2:=\int_0^T \|u(t)\|_{L^2(\R^d)}^2 e^{-\alpha t}\,dt $$
is a Banach space and the norms 
$\|\cdot\|_{\mathbb{K}}$ and $\|\cdot\|_{L^2(0,T; L^2(\R^d))}$ are equivalent. In view of \eqref{inq:contraction}, we apply the Banach fixed point theorem to have a unique element $\bar{u} \in \mathbb{K}$ and hence in $L^2(0,T; L^2(\R^d))$ such that $\mathcal{S}(\bar{u})=\bar{u}$. In other words, equation \eqref{eq:skeleton} has a unique solution in the case $h\in L^\infty(0,T; \mathcal{H})$. 
%%%%%%%%%%%%%%%%%%%%%%%%%%%%%%%%%%%%%%%%%%%%%%%%
\subsection{Existence analysis for \eqref{eq:skeleton} for general $h\in L^2(0,T; \mathcal{H})$:} \label{subsec:existence-skeleton}

For any $h\in L^2(0,T; \mathcal{H})$, there exists a sequence $h_n\in L^\infty(0,T; \mathcal{H})$ such that
\begin{align}
h_n \goto h~~~\text{in}~~L^2(0,T; \mathcal{H})~~~\text{as}~~~n\goto \infty\,. \label{conv:control-for-existence-skeleton}
\end{align}
In view of Subsection \ref{subsec:existence-skeleton-bounded-control}, there exists a unique solution $u_n\in C([0,T];L^2(\R^d))\cap L^q(0,T; \mathcal{Y})$ to \eqref{eq:skeleton} with $h$ replaced by $h_n$ i.e., $u_n(\cdot)$ satisfies the following equation:
\begin{align}
u_n(t)- u_0 - \int_0^t {\rm div}_x \big( |\nabla u_n|^{p-2}\nabla u_n\big)\,ds =  \int_0^t \sigma(u_n(s))h_n(s)\,ds\,. \label{eq:skeleton-n-general}
\end{align}

By using chain-rule, the Cauchy-Schwartz inequality, Young's inequality and the assumptions \ref{A2}-\ref{A3}, one has 
\begin{align}
&\|u_n(t)\|_{L^2(\R^d)}^2 + 2 \int_0^t \|\nabla u_n(s)\|_{L^p(\R^d)^d}^p\,ds \notag \\
& \le \|u_0\|_{L^2(\R^d)}^2 + 2\int_0^t \|\sigma(u_n)\|_{\mathcal{L}_2(\mathcal{H}, L^2(\R^d))}\|h_n(s)\|_{\mathcal{H}} \|u_n(s)\|_{L^2(\R^d)}\,ds \notag \\
&\le  \Big( \|u_0\|_{L^2(\R^d)}^2 + 2 \sigma_b^2 \int_0^T \|h_n(s)\|_{\mathcal{H}}^2\,ds \Big) + \int_0^t \big( 1 + 2 \sigma_b^2 \|h_n(s)\|_{\mathcal{H}}^2\big)\|u_n(s)\|_{L^2(\R^d)}^2\,ds\,. \label{inq-1-skeleton-n-general}
\end{align}
By Gronwall's lemma, we get
\begin{align}
\sup_{t\in [0,T]}\|u_n(t)\|_{L^2(\R^d)}^2 \le C \Big( \|u_0\|_{L^2(\R^d)}^2 + 2 \sigma_b^2 \int_0^T \|h_n(s)\|_{\mathcal{H}}^2\,ds \Big)\exp\Big( \int_0^T\|h_n(s)\|_{\mathcal{H}}^2\,ds \Big)\,.  \label{inq-2-skeleton-n-general}
\end{align}
In view of \eqref{conv:control-for-existence-skeleton}, there exists $\bar{M}>0$, independent of $n$, such that
\begin{align}
\int_0^T \|h_n(s)\|_{\mathcal{H}}^2\,ds \le \bar{M}\,. \label{inq:boundeness-control}
\end{align}
Using \eqref{inq:boundeness-control} in \eqref{inq-2-skeleton-n-general}, we see that there exists a constant $C>0$, independent of $n$ such that 
\begin{align}
\sup_{t\in [0,T]}\|u_n(t)\|_{L^2(\R^d)}^2 \le C \,. \label{inq:uni-l2-skeleton-general}
\end{align}
Again, we use \eqref{inq:uni-l2-skeleton-general} in \eqref{inq-1-skeleton-n-general} to obtain
\begin{align}
\int_0^T \|\nabla u_n(s)\|_{L^p(\R^d)^d}^p\,ds\le C\,. \label{inq:uni-grad-lp-skeleton-general}
\end{align}
Observe that thanks to \ref{A3}, the uniform bound \eqref{inq:uni-l2-skeleton-general} and \eqref{inq:boundeness-control}, one has,  for any $n, m \in \mathbb{N}$, 
\begin{align}
& \|u_n(t)-u_m(t)\|_{L^2(\R^d)}^2  \notag \\
& \le 2 \int_0^t \big\langle (\sigma(u_n)-\sigma(u_m)) h_n, u_n(s)-u_m(s)\big\rangle\,ds 
+ 2 \int_0^t \big\langle \sigma(u_m)(h_n-h_m), u_n(s)-u_m(s)\big\rangle\,ds \notag \\
& \le  \int_0^t \big( 1 + c_\sigma^2\|h_n(s)\|_{\mathcal{H}}^2\big) \|u_n(s)-u_m(s)\|_{L^2(\R^d)}^2\,ds + \int_0^t \|u_n-u_m\|_{L^2(\R^d)}^2\,ds \notag \\
& \hspace{1cm}+ \int_0^t \|\sigma(u_m)\|_{\mathcal{L}_2(\mathcal{H}, L^2(\R^d))}^2 \|h_n(s)-h_m(s)\|_{\mathcal{H}}^2\,ds  \notag \\
& \le \int_0^t \big( 2 +  c_\sigma^2\|h_n(s)\|_{\mathcal{H}}^2\big) \|u_n(s)-u_m(s)\|_{L^2(\R^d)}^2\,ds + 2 \sigma_b^2 \int_0^t \big( 1 + \|u_m(s)\|_{L^2(\R^d)}^2\big) \|h_n(s)-h_m(s)\|_{\mathcal{H}}^2\,ds \notag \\
& \le \int_0^t \big( 2 +  c_\sigma^2\|h_n(s)\|_{\mathcal{H}}^2\big) \|u_n(s)-u_m(s)\|_{L^2(\R^d)}^2\,ds + C \int_0^T\|h_n(s)-h_m(s)\|_{\mathcal{H}}^2\,ds\,, \notag
\end{align}
and hence, by Gronwall's lemma
\begin{align}
\sup_{t\in [0,T]} \|u_n(t)-u_m(t)\|_{L^2(\R^d)}^2 & \le  C \exp\Big\{ \int_0^T \|h_n(s)\|_{\mathcal{H}}^2\,ds\Big\}
\int_0^T \|h_n(s)-h_m(s)\|_{\mathcal{H}}^2\,ds  \notag \\
&\le C \int_0^T\|h_n(s)-h_m(s)\|_{\mathcal{H}}^2\,ds\,, \notag 
\end{align}
for some constant $C>0$, independent of $n$ and $m$. This show that $\{u_n\}$ is a Cauchy sequence in $C([0,T]; L^2(\R^d))$. Using this and
\eqref{inq:uni-grad-lp-skeleton-general}, one may extract a subsequence of $\{u_n\}$, still we denoted by $\{u_n\}$, such that as $n\goto \infty$
\begin{equation}\label{conv:skeleton-n-general}
u_n \goto \bar{u}_*~~~\text{in}~~C([0,T];L^2(\R^d)); \quad 
\nabla u_n \rightharpoonup \nabla \bar{u}_* ~~~\text{in}~~L^p(0,T;L^p(\R^d)^d)
\end{equation}
for some $\bar{u}_* \in C([0,T]; L^2(\R^d))\cap L^q(0,T; \mathcal{Y})$. Using \eqref{conv:skeleton-n-general} and \eqref{conv:control-for-existence-skeleton}, and following the argument as invoked in Subsection \ref{subsec:existence-skeleton-bounded-control}, one can pass to the limit in equation \eqref{eq:skeleton-n-general} as $n\goto \infty$ to obtain
\begin{align*}
\bar{u}_*(t)-u_0 - \int_0^t {\rm div}_x \big( |\nabla \bar{u}_*|^{p-2}\nabla \bar{u}_* \big)\,ds= \int_0^t f(\bar{u}_*(s))\,ds + \int_0^t \sigma(\bar{u}_*(s))h(s)\,ds \notag
\end{align*}
for all $t\in [0,T]$. In other words, $\bar{u}_*$ is the unique solution of the skeleton equation \eqref{eq:skeleton}. 
%%%%%%%%%%%%%%%%%%%%%%%%%%%%%%%%%%%%%%%%%%%%%%%%%%%%%%%%%%%%
\subsection{Proof of Theorem \ref{thm:skeleton}}
Existence of a unique solution $u_h$ for the skeleton equation \eqref{eq:skeleton} follows from Subsection \ref{subsec:existence-skeleton}.
Let $ h\in S_N$. Then $h\in L^2(0,T;\mathcal{H})$ and $\int_0^T \|h(s)\|_{\mathcal{H}}^2\,ds \le N$. Since $u_h$ is the solution of  \eqref{eq:skeleton}, by using chain rule, the Cauchy-Schwartz inequality, Young's inequality and the assumptions \ref{A2}-\ref{A3}, we have 
\begin{align*}
& \|u_h(t)\|_{L^2(D)}^2 + \int_0^t \|\nabla u_h(s)\|_{L^2(\R^d)^d}^p\,ds \notag \\
& \le  \Big( \|u_0\|_{L^2(\R^d)}^2 + 2 \sigma_b^2 \int_0^T \|h(s)\|_{\mathcal{H}}^2\,ds \Big) + \int_0^t \big( 1 + 2 \sigma_b^2 \|h(s)\|_{\mathcal{H}}^2\big)\|u_h(s)\|_{L^2(\R^d)}^2\,ds\,.
\end{align*}
An application of Gronwall's lemma along with the fact that $\int_0^T \|h(s)\|_{\mathcal{H}}^2\,ds \le N$, we get the uniform estimate 
\begin{align}
\sup_{h\in S_N} \Big\{ \sup_{t\in [0,T]} \|u_h(t)\|_{L^2(\R^d)}^2 + \int_0^T \|u_h(t)\|_{\mathcal{Y}}^q\,dt \Big\} \le C_N\, \notag
\end{align}
for some constant $C_N>0$, where $q=\max\{2, p\}$. This completes the proof of Theorem \ref{thm:skeleton}.
%%%%%%%%%%%%%%%%%%%%%%%%%%%%%%%%%%%%%%%%%%%%%%%%%%%%%%%%%%

\section{ Large deviation principle: Proof of Theorem \ref{thm:ldp}}\label{sec:proof-ldp}
In this section, we give the proof of Theorem \ref{thm:ldp}. According to \cite[Theorem 3.2]{MSZ}, which are based on the result of Budhiraja and Dupuis in \cite[Theorem 4.4]{budhi}, it is sufficient  to prove the following two conditions:
\begin{Assumptions2}
\item \label{C1} 
 For every $M < \infty$, for any family  $\{h_\epsilon; \epsilon > 0\} \subset \mathcal{A}_M$, and any $\gamma>0$, there holds
\begin{align*}
\lim_{\epsilon \goto 0} \mathbb{P}\Big( \rho(v_\eps, \bar{u}_\eps)> \gamma \Big)=0\,,
\end{align*}
where $\rho(\cdot, \cdot)$ is  a given metric on the Polish space $C([0,T]; L^2(\R^d))$ given by 
$$ \rho^2(u,v)= \sup_{s\in [0,T]} \|u(s)-v(s)\|_{L^2(\R^d)}^2,\quad \forall~ u, v \in C([0,T]; L^2(\R^d),$$
and 
\begin{align*}
   v_\eps(\cdot):= \mathcal{G}_{\epsilon} \bigg( W(\cdot) + \frac{1}{{\epsilon}} \int_0^{\cdot} h^{\epsilon}(s)\,{\rm d}s\bigg),\quad 
   \bar{u}_\eps(\cdot):= \mathcal{G}_0 \bigg( \int_0^{\cdot}h_\eps(s) \, {\rm d}s\bigg)\,.
\end{align*}
\item \label{C2} For every  $M < \infty$, the set
\begin{align*} 
    K_{M} = \left \{\mathcal{G}_0\bigg(\int_0^{.}h(s)\,{\rm d}s\bigg) : h \in S_{M} \  \right \}
\end{align*}
is a compact subset of $\mathcal{Z}=C([0,T]; L^2(\R^d)$.
\end{Assumptions2}
In the following subsections, we prove this two conditions. 
%%%%%%%%%%%%%%%%%%%%%%%%%%%%%%%%%%%%%%%%%%%%%%%%%%%%%%%%
\subsection{ Proof of condition \ref{C1}}
For any fixed $M< + \infty$, and any family $\{ h_\eps\}\subset \mathcal{A}_M$, one can use Girsanov's theorem and the uniqueness of solutions of \eqref{eq:ldp} to infer that 
$v_\eps(t):=\mathcal{G}_\eps\big( W(t) + \frac{1}{\eps}\int_0^t h_\eps(s)\,ds\big)$ is the unique solution of the SPDE
\begin{equation}\label{eq:ldp-1}
\begin{aligned}
& dv_\eps- {\rm div}_x (|\nabla v_\eps|^{p-2} \nabla v_\eps)\,dt= \sigma(v_\eps)h_\eps(t)\,dt + \eps\, \sigma(v_\eps)\,dW(t) \\
& v_\eps(0,x)=u_0(x),~~x\in \mathbb{R}^d\,,
\end{aligned}
\end{equation}
Moreover, $v_\eps \in L^2(\Omega; C([0,T];L^2(\R^d)))\cap L^q(\Omega\times(0,T); \mathcal{Y})$. Again for any fixed $M>0$, let $\bar{u}_\eps$ be the unique solution of the skeleton equation \eqref{eq:skeleton} with $h$ replaced by $h_\eps \in \mathcal{A}_M$. Then according to the definition of the mapping $\mathcal{G}_0$, one has $\bar{u}_\eps(t)=\mathcal{G}_0 \big( \int_0^t h_\eps(s)\,ds\Big)$.  Moreover, thanks to \eqref{esti:uniform-skeleton-original}, there exists a constant $C_M>0$, independent of $\epsilon$, such that  $\mathbb{P}$-a.s., 
\begin{align}
\sup_{\eps >0}\Big\{ \sup_{s\in [0,T]}\|\bar{u}_\eps(s)\|_{L^2(\R^d)}^2\Big] + \int_0^T \|\bar{u}_\eps(s)\|_{\mathcal{Y}}^q\,ds \Big\}\le C_M\,.  \label{esti:uni-bar-u-eps}
\end{align}
%%%%%%%%%%%%%%%%%%%%%%%%%%%%%%%%%%%%%%
\subsubsection{\bf Uniform estimate for $v_\eps$:}
We first deduce the uniform estimate for $v_\eps$ of equation \eqref{eq:ldp-1} . An application of It\^{o} formula to $x\mapsto \|x\|_{\R^d}^2$ along with assumption \ref{A2}, and the Cauchy-Schwartz inequality, we have
\begin{align}
& \|v_\eps(t)\|_{L^2(\R^d)}^2  + 2 \int_0^t \|\nabla v_\eps(s)\|_{L^p(\R^d)^d}^p\,ds \notag  \\
& \le \|u_0\|_{L^2(\R^d)}^2  + 2 \int_0^t \Big| \langle \sigma(v_\eps)h_\eps(s), v_\eps(s)\rangle \Big|\,ds + 2 \eps\, \Big| \int_0^t \langle v_\eps(s), \sigma(v_\eps(s))\,dW(s)\rangle\Big|\,. \label{inq:uni-1}
\end{align}
Discarding the second non-negative term of the left hand side of the above inequality, we get, after taking the expectation, 
\begin{align*}
\mathbb{E}\Big[ \sup_{s\in [0,t]}\|v_\eps(s)\|_{L^2(\R^d)}^2\Big] & \le \|u_0\|_{L^2(\R^d)}^2 + 2\mathbb{E}\Big[\int_0^t \Big| \langle \sigma(v_\eps)h_\eps(s), v_\eps(s)\rangle \Big|\,ds\Big] \\
& \hspace{2cm}+ 
2 \eps\, \mathbb{E}\Big[ \sup_{s\in [0,t]} \Big| \int_0^s \langle v_\eps(r), \sigma(v_\eps)\,dW(r)\rangle\Big|\Big]= \sum_{i=1}^3 {\tt A}_i\,.
\end{align*}
By using the Cauchy-Schwartz and Young's inequalities together with the fact that $h_\eps \in \mathcal{A}_M$, we have 
\begin{align}
{\tt A}_2&  \le \frac{1}{4} \mathbb{E}\Big[ \sup_{s\in [0,t]}\|v_\eps(s)\|_{L^2(\R^d)}^2\Big] + 
C \mathbb{E}\Big[ \Big( \int_0^t \|\sigma(v_\eps(s))\|_{\mathcal{L}_2(\mathcal{H}, L^2(\R^d))}\|h_\eps(s)\|_{\mathcal{H}}\,ds\Big)^2\Big] \notag \\
& \le \frac{1}{4} \mathbb{E}\Big[ \sup_{s\in [0,t]}\|v_\eps(s)\|_{L^2(\R^d)}^2\Big] + C  \mathbb{E}\Big[ \Big( \int_0^t (1 + \|v_\eps(s)\|_{L^2(\R^d)}^2)\,ds \Big) \int_0^T\|h_\eps(s)\|_{\mathcal{H}}^2\,ds\Big] \notag \\
& \le \frac{1}{4} \mathbb{E}\Big[ \sup_{s\in [0,t]}\|v_\eps(s)\|_{L^2(\R^d)}^2\Big]  + C_M \mathbb{E}\Big[\int_0^t \big(1 + \|v_\eps(s)\|_{L^2(\R^d)}^2\big)\,ds\Big]\,. \label{esti:a2}
\end{align}
In view of Burkholder-Davis-Gundy inequality~(BDG inequality in short) and the assumption \ref{A3}, we see that
\begin{align*}
{\tt A}_3 & \le C \eps \Big\{ \mathbb{E}\Big[ \int_0^t \|\sigma(v_\eps)\|_{\mathcal{L}_2(\mathcal{H}, L^2(\R^d))}^2 \|v_\eps(s)\|_{L^2(\R^d)}^2\,ds\Big]\Big\}^\frac{1}{2} \\
& \le \frac{1}{4} \mathbb{E}\Big[ \sup_{s\in [0,t]}\|v_\eps(s)\|_{L^2(\R^d)}^2\Big] + C_\eps \mathbb{E}\Big[\int_0^t \big(1 + \|v_\eps(s)\|_{L^2(\R^d)}^2\big)\,ds\Big]\,.
\end{align*}
Hence, we have, for all $\eps \in (0,1]$,
\begin{align*}
\mathbb{E}\Big[ \sup_{s\in [0,t]}\|v_\eps(s)\|_{L^2(\R^d)}^2\Big] \le 2 \|u_0\|_{L^2(\R^d)}^2 + C_{M,T} + 
C \int_0^t \mathbb{E}\Big[ \sup_{r\in [0,s]}\|v_\eps(r)\|_{L^2(\R^d)}^2\Big]\,dr\,.
\end{align*}
By Gronwall's lemma, there exists a constant $C>0$, independent of $\eps>0$, such that 
\begin{align}
\mathbb{E}\Big[ \sup_{s\in [0,T]}\|v_\eps(s)\|_{L^2(\R^d)}^2\Big]\le C\,. \label{uni-v-eps}
\end{align}
Using the estimates \eqref{esti:a2} and \eqref{uni-v-eps} in \eqref{inq:uni-1}, one can easily get a constant $C>0$, independent of $\eps$, such that
\begin{align}
\sup_{\eps >0}\Big\{ \mathbb{E}\Big[ \sup_{s\in [0,T]}\|v_\eps(s)\|_{L^2(\R^d)}^2\Big] + \mathbb{E}\Big[ \int_0^T \|v_\eps(s)\|_{\mathcal{Y}}^q\,ds \Big]\Big\}\le C\,.\notag % \label{esti:uni-v-eps}
\end{align}

\subsubsection{\bf Verification of condition \ref{C1}}
Note that the metric $\rho$ on $\mathcal{Z}:=C[(0,T]; L^2(\R^d))$ is given by
\begin{align*}
\rho(u,v):= \Big\{ \sup_{t\in [0,T]}\|u(t)-v(t)\|_{L^2(\R^d)}^2\Big\}^\frac{1}{2},\quad \forall~u,v\in \mathcal{Z}\,.
\end{align*}
Thanks to  Markov inequality, to validate the condition \ref{C1}, it suffices to show that
\begin{align}
\lim_{\epsilon \goto 0} \mathbb{E}\Big[ \sup_{t\in [0,T]}\|v_\eps(t)-\bar{u}_\epsilon(t)\|_{L^2(\R^d)}^2\Big]=0\,. \label{cond:c1-sufficient}
\end{align}
Let $z_\eps(t)= v_\eps(t)-\bar{u}_\eps(t)$. Then $z_\eps(\cdot)$ satisfies the following SPDE
\begin{equation*}
\begin{aligned}
& dz_\eps(t)-{\rm div}_x \Big( |\nabla v_\eps|^{p-2}\nabla v_\eps - |\nabla \bar{u}_\eps|^{p-2}\nabla \bar{u}_\eps\Big)\,dt \\
& \hspace{2cm}=\big( \sigma(v_\eps)-\sigma(\bar{u}_\eps)\big) h_\eps(t)\,dt + \eps \sigma(v_\eps)\,dW(t),~~~(t,x)\in (0,T)\times \R^d\,, \\
&z_\eps(0,x)=0,~~~x\in \R^d\,.
\end{aligned}
\end{equation*}
By using It\^{o} formula, the Cauchy-Schwartz inequality, Young's inequality, assumptions \ref{A2}-\ref{A3}, the uniform estimate \eqref{uni-v-eps}, and the inequality \eqref{inq:monotone}, we have 
\begin{align}
\|z_\eps(t)\|_{L^2(\R^d)}^2 & \le C \int_0^t \| \sigma(v_\eps(s))-\sigma(\bar{u}_\eps)\|_{\mathcal{L}_2(\mathcal{H}, L^2(\R^d))}^2 \|h_\eps(s)\|_{\mathcal{H}}^2\,ds \notag \\
& + \eps^2 \int_0^t \|\sigma(v_\eps(s))\|_{\mathcal{L}_2(\mathcal{H}, L^2(\R^d))}^2\,ds 
 + 2 \eps \sup_{s\in [0,t]} \Big| \int_0^s \langle z_\eps(r), \sigma(v_\eps(r))\,dW(r)\rangle\Big| \notag \\
& \le C T \eps^2 + C \int_0^t  \|h_\eps(s)\|_{\mathcal{H}}^2 \|z_\eps(s)\|_{L^2(\R^d)}^2\,ds + 2 \eps\, \sup_{s\in [0,t]} \Big| \int_0^s \langle z_\eps(r), \sigma(v_\eps(r))\,dW(r)\rangle\Big|\,. \label{inq-1-c1}
\end{align}
An application of Gronwall's lemma on \eqref{inq-1-c1} then gives 
\begin{align}
\sup_{t\in [0, T]}\|z_\eps(t)\|_{L^2(\R^d)}^2 & \le \Bigg\{ CT \eps^2 + 2 \eps \sup_{s\in [0, T]} \Big| \int_0^s \langle z_\eps(r), \sigma(v_\eps(r))\,dW(r)\rangle\Big|\Bigg\} \exp\Big( \int_0^T  \|h_\eps(s)\|_{\mathcal{H}}^2\,ds \Big)\,. \notag 
\end{align}
Since $h_\eps \in \mathcal{A}_M$, we get, after taking expectation,
\begin{align}
& \mathbb{E}\Big[\sup_{t\in [0, T]}\|z_\eps(t)\|_{L^2(\R^d)}^2\Big] \notag \\
& \le 
C(M,T) \eps^2 
+ C(M,T)\, \eps \,\mathbb{E}\Big[ \sup_{s\in [0, T]} \Big| \int_0^s \langle z_\eps(r), \sigma(v_\eps(r))\,dW(r)\rangle\Big|\Big]\equiv \sum_{i=1}^2 {\tt C}_i\,.\label{inq-2-c1}
\end{align}
Thanks to BDG inequality, Young's inequality, \eqref{inq: growth-sigma}, and \eqref{uni-v-eps}, we see that
\begin{align}
{\tt C}_2 & \le C(M,T)\eps\, \mathbb{E}\Big[\Big( \int_0^{T}\|z_\eps(r)\|_{L^2(\R^d)}^2\|\sigma(v_\eps(r))\|_{\mathcal{L}_2(\mathcal{H}, L^2(\R^d))}^2\,dr\Big)^\frac{1}{2}\Big] \notag \\
& \le  \frac{1}{4} \mathbb{E}\Big[\sup_{t\in [0, T]}\|z_\eps(t)\|_{L^2(\R^d)}^2\Big]  + C(M,T)\,\eps^2\, \mathbb{E}\Big[ \int_0^T \big( 1 + \|v_\eps(r)\|_{L^2(\R^d)}^2\big)\,dr\Big] \notag \\
& \le \frac{1}{4} \mathbb{E}\Big[\sup_{t\in [0, T]}\|z_\eps(t)\|_{L^2(\R^d)}^2\Big] + C(T, M)\eps^2\,.\label{esti:c2}
\end{align}
Hence the assertion \eqref{cond:c1-sufficient} holds once we use \eqref{esti:c2} in \eqref{inq-2-c1} and pass to the limit as $\eps \goto 0$. 

\subsection{Proof of condition \ref{C2}}\label{subsec:cond-c2}
For any fixed $M>0$, 
let $\{\bar{h}_n\} \subset S_M$ be a sequence of functions such that $\bar{h}_n \goto \bar{h}$ weakly in $L^2(0,T; \mathcal{H})$. Since $S_M$ is weakly compact, to prove condition \ref{C2}, it suffices to show that $\bar{u}_n \goto \bar{u}_{\bar{h}}$ strongly in $C([0,T]; L^2(\R^d))$, where $\bar{u}_n$ and $\bar{u}_{\bar{h}}$ are the solutions of \eqref{eq:skeleton} corresponding to $\bar{h}_n$ and $\bar{h}$ respectively. According to  \eqref{esti:uniform-skeleton-original},  there exists a constant ${\tt K}$, independent of $n$, such that 
 \begin{align}
\sup_{n\in \mathbb{N}} \Big\{ \sup_{t\in [0,T]} \big( \|\bar{u}_n(t)\|_{L^2(\R^d)}^2 + \|\bar{u}_{\bar{h}}(t)\|_{L^2(\R^d)}^2\big) + \int_0^T 
\big(\|\bar{u}_n(t)\|_{\mathcal{Y}}^q + \|\bar{u}_{\bar{h}}(t)\|_{\mathcal{Y}}^q\big)\,dt \Big\}={\tt K}\,. \label{apriori-cond-c2}
 \end{align}
 As mentioned earlier, due to the lack of compactness of the embedding $\mathcal{Y}\hookrightarrow L^2(\R^d) \hookrightarrow \mathcal{Y}^\prime$, it prevent us to show that strong  convergence of $\{\bar{u}_n\}$ in a straight-forward manner.  To overcome this, we deduce the  localized time increment estimate. For any given small $\delta >0$, let $t(\delta):=\lfloor \frac{t}{\delta}\rfloor \delta$, where 
$\lfloor s \rfloor$ denotes the largest integer smaller or equal to $s$. 
\begin{lem}\label{lem:time-increment-un-cond-c2}
Let $\bar{u}_n$ be the unique solution of \eqref{eq:skeleton} with $\bar{h}_n\in S_M$. Then, there exists a positive constant $C$, independent of $n$, such that 
\begin{align}
\int_0^{T} \| \bar{u}_n(t)-\bar{u}_n(t(\delta))\|_{L^2(\R^d)}^2\,dt\le C\delta\,. \label{esti:time-increment-un-cond-2}
\end{align}
\end{lem}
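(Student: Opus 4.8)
The plan is to derive a ``core'' bound for the $L^2$--increment of $\bar u_n$ between two arbitrary nearby times, and then to reduce the grid--increment $\bar u_n(t)-\bar u_n(t(\delta))$ to pieces of that form in such a way that the pairing in the energy identity only ever involves a time running over a full--measure set.

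First I would record the a priori inputs, all uniform in $n$: from \eqref{apriori-cond-c2}, $\sup_{t\in[0,T]}\|\bar u_n(t)\|_{L^2(\R^d)}^2\le{\tt K}$, while the energy estimate in the proof of Theorem~\ref{thm:skeleton} additionally gives $\int_0^T\|\nabla\bar u_n(s)\|_{L^p(\R^d)^d}^p\,ds\le C$; and, setting $S_n(s):=\sigma(\bar u_n(s))\bar h_n(s)$, assumption \ref{A3} together with $\|\bar u_n(s)\|_{L^2(\R^d)}\le\sqrt{{\tt K}}$ and $\bar h_n\in S_M$ yields $\int_0^T\|S_n(s)\|_{L^2(\R^d)}^2\,ds\le C$. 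The core claim is: for a.e.\ $t'\in[0,T)$ (hence $\bar u_n(t')\in\mathcal{Y}$) and every $t\in[t',T]$,
\begin{align*}
\|\bar u_n(t)-\bar u_n(t')\|_{L^2(\R^d)}^2
&\le \frac{2}{p}\,(t-t')\,\|\nabla\bar u_n(t')\|_{L^p(\R^d)^d}^p \\
&\quad +4\sqrt{{\tt K}}\int_{t'}^{t}\|S_n(s)\|_{L^2(\R^d)}\,ds .
\end{align*}
To obtain this I would apply the chain rule for the Gelfand triple (in the version allowing an $L^1(0,T;L^2(\R^d))$--perturbation of the $\mathcal{Y}^\prime$--valued time derivative) to $s\mapsto\|\bar u_n(s)-\bar u_n(t')\|_{L^2(\R^d)}^2$ on $[t',t]$, which is legitimate since $\bar u_n(\cdot)-\bar u_n(t')\in L^p(t',t;\mathcal{Y})$ as soon as $\bar u_n(t')\in\mathcal{Y}$. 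This produces $2\int_{t'}^{t}\big[\,{}_{\mathcal{Y}^\prime}\langle\Delta_p\bar u_n(s),\bar u_n(s)-\bar u_n(t')\rangle_{\mathcal{Y}}+\langle S_n(s),\bar u_n(s)-\bar u_n(t')\rangle_{L^2(\R^d)}\big]\,ds$. I would then expand ${}_{\mathcal{Y}^\prime}\langle\Delta_p\bar u_n(s),\bar u_n(s)-\bar u_n(t')\rangle_{\mathcal{Y}}=-\|\nabla\bar u_n(s)\|_{L^p(\R^d)^d}^p+\int_{\R^d}|\nabla\bar u_n(s)|^{p-2}\nabla\bar u_n(s)\cdot\nabla\bar u_n(t')\,dx$, estimate the last integral by Hölder and then Young's inequality, so that the dissipation term $-\|\nabla\bar u_n(s)\|_{L^p(\R^d)^d}^p$ absorbs $\frac{p-1}{p}\|\nabla\bar u_n(s)\|_{L^p(\R^d)^d}^p$ and only $\frac1p\|\nabla\bar u_n(t')\|_{L^p(\R^d)^d}^p$ remains; the control term is bounded crudely via $\|\bar u_n(s)\|_{L^2(\R^d)}+\|\bar u_n(t')\|_{L^2(\R^d)}\le2\sqrt{{\tt K}}$.

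With the core estimate available, I would conclude as follows. For $t\in(0,\delta]$ one has $t(\delta)=0$, so $\|\bar u_n(t)-\bar u_n(t(\delta))\|_{L^2(\R^d)}=\|\bar u_n(t)-u_0\|_{L^2(\R^d)}\le\sqrt{2({\tt K}+\|u_0\|_{L^2(\R^d)}^2)}$, which contributes $\le C\delta$ after integration on $(0,\delta]$; note that one cannot apply the core estimate here because $u_0$ need not lie in $\mathcal{Y}$. For $t\in(\delta,T]$ one has $t-\delta\le t(\delta)\le t$, and the triangle inequality together with two applications of the core estimate with $t'=t-\delta$ (an admissible choice for a.e.\ $t$, since then $\bar u_n(t-\delta)\in\mathcal{Y}$) gives $\|\bar u_n(t)-\bar u_n(t(\delta))\|_{L^2(\R^d)}^2\le C\delta\,\|\nabla\bar u_n(t-\delta)\|_{L^p(\R^d)^d}^p+C\int_{t-\delta}^{t}\|S_n(s)\|_{L^2(\R^d)}\,ds$. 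Integrating over $t\in(\delta,T]$, I would use the shift identity $\int_\delta^T\|\nabla\bar u_n(t-\delta)\|_{L^p(\R^d)^d}^p\,dt=\int_0^{T-\delta}\|\nabla\bar u_n(r)\|_{L^p(\R^d)^d}^p\,dr\le C$, the Fubini bound $\int_\delta^T\int_{t-\delta}^{t}\|S_n(s)\|_{L^2(\R^d)}\,ds\,dt\le\delta\int_0^T\|S_n(s)\|_{L^2(\R^d)}\,ds$, and Cauchy--Schwarz $\int_0^T\|S_n(s)\|_{L^2(\R^d)}\,ds\le\sqrt{T}\big(\int_0^T\|S_n(s)\|_{L^2(\R^d)}^2\,ds\big)^{1/2}\le C$; all terms are then $O(\delta)$, which is \eqref{esti:time-increment-un-cond-2}.

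The main obstacle is the unbounded, monotone $p$--Laplace term. The naive route --- pairing $\Delta_p\bar u_n(s)$ directly against the increment $\bar u_n(t)-\bar u_n(t(\delta))$ --- forces the quantity $\|\nabla\bar u_n(t(\delta))\|_{L^p(\R^d)^d}$ at the \emph{grid} times, and its time average $\int_0^T\|\nabla\bar u_n(t(\delta))\|_{L^p(\R^d)^d}^p\,dt=\delta\sum_k\|\nabla\bar u_n(k\delta)\|_{L^p(\R^d)^d}^p$ need not remain bounded as $\delta\to0$ (already the linear heat flow with $u_0\in L^2\setminus H^1$ exhibits this). The device that fixes it is to split through the \emph{continuously} varying time $t-\delta$ and to pair in the energy identity only against $\bar u_n(t-\delta)$, which --- unlike $\bar u_n(t(\delta))$ --- belongs to $\mathcal{Y}$ for a.e.\ $t$, so that the harmless shift identity $\int_\delta^T\|\nabla\bar u_n(t-\delta)\|_{L^p(\R^d)^d}^p\,dt\le\int_0^T\|\nabla\bar u_n\|_{L^p(\R^d)^d}^p\,dt$ is available; Young's inequality then lets the dissipation absorb the $p$--Laplace contribution, so that no iterated Hölder inequality in the time variable is in fact needed.
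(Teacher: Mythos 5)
Your proof is correct, and while it follows the same two outer reductions as the paper --- splitting $(0,\delta]$ from $(\delta,T]$, then splitting $\bar u_n(t)-\bar u_n(t(\delta))$ through the continuously varying time $t-\delta$ so as never to accumulate $\|\nabla\bar u_n\|_{L^p}$ at the grid points --- it diverges from the paper in how the $p$--Laplace pairing $\int_{t-\delta}^{t}{}_{\mathcal{Y}^\prime}\langle\Delta_p\bar u_n(s),\bar u_n(s)-\bar u_n(t-\delta)\rangle_{\mathcal{Y}}\,ds$ is handled. The paper applies Cauchy--Schwarz to get $\|\Delta_p\bar u_n(s)\|_{\mathcal{Y}^\prime}\,\|\bar u_n(s)-\bar u_n(t-\delta)\|_{\mathcal{Y}}$ and then H\"older in the time variable with exponents $q$ and $q/(q-1)$; this forces one to control $\int_0^T\|\Delta_p\bar u_n\|_{\mathcal{Y}^\prime}^{q/(q-1)}\,ds\sim\int_0^T\|\nabla\bar u_n\|_{L^p}^{(p-1)q/(q-1)}\,ds$, which matches the available bound $\int_0^T\|\nabla\bar u_n\|_{L^p}^{p}\,ds<\infty$ only when $q=p$, a point the paper leaves somewhat delicate given that $q$ is elsewhere taken to be $\min\{p,2\}$. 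Your route instead expands the pairing into $-\|\nabla\bar u_n(s)\|_{L^p}^p+\int|\nabla\bar u_n(s)|^{p-2}\nabla\bar u_n(s)\cdot\nabla\bar u_n(t-\delta)$, invokes H\"older (in $x$) and Young so that the nonpositive dissipation term absorbs the $\frac{p-1}{p}\|\nabla\bar u_n(s)\|_{L^p}^p$ contribution, and leaves only $\frac{1}{p}\|\nabla\bar u_n(t-\delta)\|_{L^p}^p$, which integrates to $O(\delta)$ by the elementary shift identity. This buys two things: it eliminates the iterated H\"older in $t$ altogether, and it works uniformly for all $1<p<\infty$ with no exponent bookkeeping, since the only input is $\nabla\bar u_n\in L^p(0,T;L^p)$. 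You also state explicitly the measurability/regularity point (the core estimate is only asserted for $t'$ such that $\bar u_n(t')\in\mathcal{Y}$, a full--measure set in $t'$, and is applied with $t'=t-\delta$, so only a null set of $t$ is excluded and the integral is unaffected), which the paper glosses over. The control--term bound $\int_{t-\delta}^{t}\langle S_n(s),\bar u_n(s)-\bar u_n(t-\delta)\rangle\,ds\le 2\sqrt{\tt K}\int_{t-\delta}^{t}\|S_n(s)\|_{L^2}\,ds$ and its Fubini/Cauchy--Schwarz treatment match the paper's $\pmb{B}_{1,2}$, $\pmb{B}_{1,3}$ estimates in effect. In short: same outer architecture, genuinely different and cleaner treatment of the monotone operator.
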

\begin{proof}
Observe that, thanks to \eqref{apriori-cond-c2} 
\begin{align}
&\int_0^{T} \| \bar{u}_n(t)-\bar{u}_n(t(\delta))\|_{L^2(\R^d)}^2\,dt  \notag \\
& = \int_0^{\delta} \| \bar{u}_n(t)-u_0\|_{L^2(\R^d)}^2\,dt
+ \int_\delta^{T} \| \bar{u}_n(t)-\bar{u}_n(t(\delta))\|_{L^2(\R^d)}^2\,dt \notag \\
&  \le C \delta + 2 \int_\delta^{T} \| \bar{u}_n(t)-\bar{u}_n(t-\delta)\|_{L^2(\R^d)}^2\,dt + 2  \int_\delta^{T} \| \bar{u}_n(t-\delta)-\bar{u}_n(t(\delta))\|_{L^2(\R^d)}^2\,dt \notag \\
&\equiv C \delta + \sum_{i=1}^2{\pmb B}_i\,. \label{inq-1-time-increment-un-cond-c2}
\end{align}
We first estimate the term ${\pmb B}_1$. Thanks to Cauchy-Schwartz inequality, the assumption \ref{A2}, we have
\begin{align}
{\pmb B}_1 &\le 2 \int_{\delta}^T \int_{t-\delta}^t \|\Delta_p (\bar{u}_n(s))\|_{\mathcal{Y}^\prime}\|\bar{u}_n(s)-\bar{u}_n(t-\delta)\|_{\mathcal{Y}}\,ds\,dt \notag \\
& + C  \int_{\delta}^T \int_{t-\delta}^t \|\bar{u}_n(s)\|_{L^2(\R^d)} \|\bar{u}_n(s)-\bar{u}_n(t-\delta)\|_{L^2(\R^d)}\,ds\,dt \notag \\
& + C  \int_{\delta}^T \int_{t-\delta}^t \|\sigma(\bar{u}_n(s))\|_{\mathcal{L}_2(\mathcal{H}, L^2(\R^d))}\|\bar{h}_n(s)\|_{\mathcal{H}} \|\bar{u}_n(s)-\bar{u}_n(t-\delta)\|_{L^2(\R^d)}\,ds\,dt \equiv \sum_{i=1}^3 {\pmb B}_{1,i}\,. \notag 
\end{align}
By H\"{o}lder's inequality and the uniform estimate \eqref{apriori-cond-c2}, we have
\begin{equation*}
\begin{aligned}
{\pmb B}_{1,1} & \le C \Big\{  \int_{\delta}^T \int_{t-\delta}^t \|\Delta_p (\bar{u}_n(s))\|_{\mathcal{Y}^\prime}^{\frac{q}{q-1}}\,ds\,dt \Big\}^\frac{q-1}{q} \Big\{  \int_{\delta}^T \int_{t-\delta}^t \|\bar{u}_n(s)-\bar{u}_n(t-\delta)\|_{\mathcal{Y}}^q\,ds\,dt\Big\}^\frac{1}{q} \\
& \le C \delta  \Big\{  \int_{0}^T \|\Delta_p (\bar{u}_n(s))\|_{\mathcal{Y}^\prime}^{\frac{q}{q-1}}\,ds\,dt \Big\}^\frac{q-1}{q} \Big\{ \int_0^T \|\bar{u}_n(t)\|_{\mathcal{Y}}^q\,dt\Big\}^\frac{1}{q} \\
& \le  C \delta  \Big\{  \int_{0}^T \big( 1 + \|\nabla \bar{u}_n(s)\|_{L^p(\R^d)^d}^q\big)\,ds\, \Big\}^\frac{q-1}{q} \le C \delta\,, \\
{\pmb B}_{1,2} & \le C \Big\{  \int_{\delta}^T \int_{t-\delta}^t \|\bar{u}_n(s)\|_{L^2(\R^d)}^2\,ds\,dt \Big\}^\frac{1}{2} 
\Big\{  \int_{\delta}^T \int_{t-\delta}^t \|\bar{u}_n(s)-\bar{u}_n(t-\delta)\|_{L^2(\R^d)}^2\,ds\,dt \Big\}^\frac{1}{2} \le C \delta\,, \\
{\pmb B}_{1,3} & \le  C  \Big\{  \int_{\delta}^T \int_{t-\delta}^t \big( 1 +\|\bar{u}_n(s)\|_{L^2(\R^d)}^2\big)\|\bar{h}_n(s)\|_{\mathcal{H}}^2\,ds\,dt \Big\}^\frac{1}{2} \Big\{  \int_{\delta}^T \int_{t-\delta}^t \|\bar{u}_n(s)-\bar{u}_n(t-\delta)\|_{L^2(\R^d)}^2\,ds\,dt \Big\}^\frac{1}{2} \\
& \le C \sqrt{\delta} \Big\{  \int_{\delta}^T \int_{t-\delta}^t \|\bar{h}_n(s)\|_{\mathcal{H}}^2\,ds\,dt \Big\}^\frac{1}{2} \le C \delta\,.
\end{aligned}
\end{equation*}
Thus, there exists a constant $C>0$, independent of $n$ such that
\begin{align}
{\pmb B}_1 \le C \delta\,. \label{esti:b1-cond-c2}
\end{align}
Following the similar argument as invoked in the proof of \eqref{esti:b1-cond-c2}, we infer that, there exists a constant $C>0$, only depending on $T, q, M, u_0, {\tt K}$, such that
\begin{align}
{\pmb B}_2 \le C \delta\,. \label{esti:b2-cond-c2}
\end{align}
Putting the inequalities \eqref{esti:b1-cond-c2} and \eqref{esti:b2-cond-c2} in \eqref{inq-1-time-increment-un-cond-c2}, we arrive at the assertion \eqref{esti:time-increment-un-cond-2}. 
\end{proof}
\begin{rem}\label{rem:time-increment-uh-cond-c2}
In view of the proof of Lemma \ref{lem:time-increment-un-cond-c2}, one can easily see that $\bar{u}_{\bar{h}}$ satisfies the estimate
\eqref{esti:time-increment-un-cond-2}. 
\end{rem}
%%%%%%%%%%%%%%%%%%%%%%%%%%%%%%%%%%%%%%%%%%%%%%%%%%%%%%%
\subsubsection{\bf Validation of condition \ref{C2}:} To validate the condition \ref{C2}, we need to show that
\begin{align}
\lim_{n\goto \infty} \sup_{t\in [0,T]}\| \bar{u}_n(t)-\bar{u}_{\bar{h}}(t)\|_{L^2(\R^d)}^2=0\,. \label{cond:c2-sufficient}
\end{align}
Let $X_n(t)= \bar{u}_n(t)-\bar{u}_{\bar{h}}(t)$. Then $X_n(t)$ satisfies the following deterministic PDE
\begin{equation*}
\begin{aligned}
&\frac{d}{dt} X_n(t) - {\rm div}_x \Big( |\nabla \bar{u}_n|^{p-2}\nabla \bar{u}_n - 
|\nabla \bar{u}_{\bar{h}}|^{p-2}\nabla \bar{u}_{\bar{h}}\Big)=  \big( \sigma(\bar{u}_n)\bar{h}_n(t)- 
\sigma(\bar{u}_{\bar{h}})\bar{h}(t)\big), \\
&X_n(0)=0\,.
\end{aligned}
\end{equation*}
An application of chain-rule, the Cauchy-Schwartz inequality, the assumptions \ref{A2} and \ref{A3}, Young's inequality, and the inequality \eqref{inq:monotone} gives
\begin{align*}
\|X_n(t)\|_{L^2(\R^d)}^2 \le  2 \int_0^T \big| \langle 
\sigma(\bar{u}_{\bar{h}}(t))(\bar{h}_n(t)-\bar{h}(t)), X_n(t)\rangle \big|\,dt\,.
\end{align*}
Thus, using the fact that $\bar{h}_n \in S_M$, we have
\begin{align}
\sup_{t\in [0,T]}\|X_n(t)\|_{L^2(\R^d)}^2 & \le 2 \int_0^T\Big| \big\langle \sigma(\bar{u}_{\bar{h}}(s))(\bar{h}_n(s)-\bar{h}(s)), X_n(s)\big\rangle\Big|\,ds \notag \\
& \le C \int_0^T \Big| \langle \sigma(\bar{u}_{\bar{h}}(s))(\bar{h}_n(s)-\bar{h}(s)), X_n(s)- X_n(s(\delta))\rangle \Big|\,ds \notag \\
& + C \int_0^T \Big| \langle \big\{\sigma(\bar{u}_{\bar{h}}(s))- \sigma(\bar{u}_{\bar{h}}(s(\delta)))\big\}(\bar{h}_n(s)-\bar{h}(s)), X_n(s(\delta))\rangle \Big|\,ds \notag \\
& + C \sup_{t\in [0,T]} \Big| \int_{t(\delta)}^t \sigma(\bar{u}_{\bar{h}}(s(\delta)))(\bar{h}_n(s)-\bar{h}(s)), X_n(s(\delta))\rangle \,ds\Big| \notag \\
& + C \sup_{t\in [0,T]} \sum_{k=0}^{\lfloor \frac{t}{\delta}\rfloor -1} \Big| \langle \sigma(\bar{u}_{\bar{h}}(k\delta) \int_{k\delta}^{(k+1)\delta} (\bar{h}_n(s)-\bar{h}(s))\,ds, X_n(k\delta)\rangle \Big|\equiv \sum_{i=1}^4 {\pmb C}_{i,n}\,.
\label{inq:cond-c2-1}
\end{align}
 In view of Lemma \ref{lem:time-increment-un-cond-c2} and Remark \ref{rem:time-increment-uh-cond-c2}, we get, 
 \begin{align}
\int_0^T \|X_n(t)- X_n(t(\delta))\|_{L^2(\R^d)}^2\,dt \le C \delta\,. \label{inq:time-increment-difference-cond-c2}
 \end{align}
 By using H\"{o}lder's inequality, Remark \ref{rem:time-increment-uh-cond-c2}, \eqref{apriori-cond-c2}, \eqref{inq:time-increment-difference-cond-c2}, the assumption \ref{A3}, and the fact that $\bar{h}_n, \bar{h}\in S_M$, we get
 \begin{equation}\label{esti:c1n-c3n}
\begin{aligned}
{\pmb C}_{1,n} & \le C \Big( \int_0^T \|\sigma(\bar{u}_{\bar{h}}(s))\|_{\mathcal{L}_2(\mathcal{H}, L^2(\R^d))}^2 \|\bar{h}_n(s)-\bar{h}(s)\|_{\mathcal{H}}^2\,ds \Big)^\frac{1}{2} \Big( \int_0^T \| X_n(s)- X_n(s(\delta))\|_{L^2(\R^d)}^2\,ds\Big)^\frac{1}{2} \\
& \le C \sqrt{\delta} \Big( \int_0^T (1+ \| \bar{u}_{\bar{h}}(s)\|_{L^2(\R^d)}^2) \|\bar{h}_n(s)-\bar{h}(s)\|_{\mathcal{H}}^2\,ds \Big)^\frac{1}{2} \le C \sqrt{\delta}\,, \\
{\pmb C}_{2,n} & \le C \Big( \int_0^T \| \bar{u}_{\bar{h}}(s)-\bar{u}_{\bar{h}}(s(\delta))\|_{L^2(\R^d)}^2\,ds \Big)^{\frac{1}{2}} \Big( \int_0^T \|\bar{h}_n(s)-\bar{h}(s)\|_{\mathcal{H}}^2 \|X_n(s(\delta))\|_{L^2(\R^d)}^2\,ds\Big)^\frac{1}{2} \\
& \le C\sqrt{\delta} \Big( \int_0^T \|\bar{h}_n(s)-\bar{h}(s)\|_{\mathcal{H}}^2 \,ds\Big)^\frac{1}{2} \le C \sqrt{\delta}\,, \\
{\pmb C}_{3,n} & \le C \sqrt{\delta} \Big( \sup_{t\in [0,T]}\|X_n(t)\|_{L^2(\R^d)}^2\Big)^\frac{1}{2} \Big( \int_0^T \big( 1 + \|\bar{u}_{\bar{h}}(s(\delta))\|_{L^2(\R^d)}^2\big) \|\bar{h}_n(s)-\bar{h}(s)\|_{\mathcal{H}}^2 \,ds\Big)^\frac{1}{2} \le C \sqrt{\delta}\,.
\end{aligned}
\end{equation}
 Since $\sigma$ is an operator from $L^2(\R^d) $ to $\mathcal{L}_2(\mathcal{H}, L^2(\R^d))$, we see that $\sigma(\bar{u}_{\bar{h}}(k\delta)$ is a compact operator on $L^2(\R^d)$, and hence one has, for each fixed $k$,
 \begin{align*}
\Big\| \sigma(\bar{u}_{\bar{h}}(k\delta) \int_{k\delta}^{(k+1)\delta} (\bar{h}_n(s)-\bar{h}(s))\,ds\Big\|_{L^2(\R^d)}\goto 0~~~\text{as}~~n\goto \infty\,.
 \end{align*}
 Moreover, it is easy to see that $|{\pmb C}_{4,n}| \le C$ for some constant $C>0$ independent of $n$. Hence we conclude that
 \begin{align}
{\pmb C}_{4,n} \goto 0~~~\text{as}~~n\goto \infty\,. \label{conv:cond-c2-final-1}
 \end{align}
 We now combine \eqref{esti:c1n-c3n} and \eqref{conv:cond-c2-final-1} in \eqref{inq:cond-c2-1} to have
\begin{align}
\lim_{n\goto \infty} \sup_{t\in [0,T]}\| \bar{u}_n(t)-\bar{u}_{\bar{h}}(t)\|_{L^2(\R^d)}^2 \le C\sqrt{\delta}\,,\label{esti:final-C2}
\end{align}
where $C>0$ is a constant, independent of $\delta>0$. Thus, sending $\delta \goto 0$ in \eqref{esti:final-C2}, we arrive at the assertion \eqref{cond:c2-sufficient}. 

\section{Quadratic TCI inequality~(hence measure concentration) }\label{sec:proof-tci}
In this section, we prove Theorem \ref{thm:maintci}. Let $\mu$ be the law of the solution $u$ of equation \eqref{eq:spde} on the space $C([0,T]; L^2(\R^d))$ and $\nu$ be a probability measure on $C([0,T]; L^2(\R^d))$ such that $\nu  \ll  \mu$. Consider the new probability measure $\mathbb{P}^*$ on the filtered probability space $\big(\Omega, \mathbb{P},\mathcal{F}, \{\mathcal{F}_t\}\big)$ given by
$$ d \mathbb{P}_*= \frac{d\nu}{d\mu}\,d\mathbb{P}.$$
Then the Radon-Nikodyn derivative restricted on $\mathcal{F}_t$ given by
$\textbf{M}_t := \frac{{\rm d}\mathbb{P}_{*}}{{\rm d}\mathbb{P}}\vert_{\mathcal{F}_t},~t\in [0,T]$ forms a.s., continuous martingale with respect to the given probability measure $\mathbb{P}$. Then by Girsanov's theorem, there exists an $\mathcal{H}$-valued $\{\mathcal{F}_t\}$-adapted process $g(s)$
with the property $\mathbb{P}_{*} - a.s., \displaystyle  \int_0^t \|g(s)\|_{\mathcal{H}}^2\,{\rm d}s\, < \, \infty $ for all $t \in [0,T]$ such that
the stochastic process ${W}_*(\cdot)$, defined by
\[{W}_*(t) :=  W(t) - \int_0^t g(s)\,{\rm d}s\]
becomes a cylindrical Wiener process with respect to the new probability measure $\mathbb{P}_{*}$.
Moreover, thanks to \cite[Lemma $3.1$]{sarantsev-2019}, the martingale $\textbf{M}_t $ can be expressed as
\[\textbf{M}_t  =  \exp\bigg (\int_0^t \big\langle g(s),\,dW(s)\big\rangle - \frac{1}{2}\int_0^t \|g(s)\|_{\mathcal{H}}^2\,{\rm d}s\bigg), \quad \mathbb{P}_{*}- \text{a.s}.\] 
and the relative entropy (Kullback information) of $\nu$ with respect to $\mu$ can be given in terms of the process $g$:
\begin{equation}\label{eq:kullback-expression}
   \mathcal{H}(\nu|\mu) =  \frac{1}{2}\mathbb{E}_*\Big[\int_0^T \|g(s)\|_{\mathcal{H}}^2\,{\rm d}s\Big], 
\end{equation}
where $\mathbb{E}_*$ stands for the expectation under the  new probability measure $\mathbb{P}_{*}$.
%%%%%%%%%%%%%%%%%%%%%%%%%%%%%%
\subsection{Proof of Theorem \ref{thm:maintci}}

For the above mentioned adapted process $g$, consider the following stochastic PDE:  
\begin{equation}\label{eq:tci-1}
\begin{aligned}
&dv_g(t)-{\rm div}_x(|\nabla v_g|^{p-2}\nabla v_g)\,dt= \sigma(v_g)\,dW_*(t) + \sigma(v_g)g(t)\,dt,~~~(t,x)\in (0,T]\times \R^d, \\
& v_g(0,x)=u_0(x),~~~x\in \R^d\,.
\end{aligned}
\end{equation}
Thanks to Girsanov's theorem and \cite[Theorems $1.3$ $\&$ $1.4$]{schmitz-23}, equation \eqref{eq:tci-1} has a unique solution $v_g$. Again, by \cite[Theorems $1.3$ $\&$ $1.4$]{schmitz-23}, the SPDE
\begin{equation}\label{eq:tci-2}
\begin{aligned}
&dv(t)-{\rm div}_x(|\nabla v|^{p-2}\nabla v)\,dt= \sigma(v)\,dW_*(t),~~~(t,x)\in (0,T]\times \R^d, \\
& v(0,x)=u_0(x),~~~x\in \R^d\,.
\end{aligned}
\end{equation}
has a unique solution, say $v$. Then it follows that, under the new probability measure $\mathbb{P}_*$, the laws of $(v, v_g)$ forms a coupling $(\mu, \nu)$. Observe that, by using the definition of the Wasserstein distance, one has
\begin{equation}\label{inq:important-tci}
\begin{aligned}
{\rm W}_2(\nu, \mu)^2 \le \mathbb{E}_*\Big[ \sup_{t\in [0,T]} \|v_g(t)-v(t)\|_{L^2(\R^d)}^2\Big]\,.
\end{aligned}
\end{equation}
Thus in view of \eqref{inq:important-tci} and \eqref{eq:kullback-expression}, to show that $\mu \in T^{2}({\tt C})$, it is sufficient to prove the following inequality
\begin{align}
\mathbb{E}_*\Big[ \sup_{t\in [0,T]} \|v_g(t)-v(t)\|_{L^2(\R^d)}^2\Big] \le {\tt C}\, \mathbb{E}_* \Big[\|g\|_{L^2(0,T; \mathcal{H})}^2\Big]\,. \label{inq:tci-suff-cond}
\end{align}
Since $v_g$ resp. $v$ satisfies the SPDE \eqref{eq:tci-1} resp. \eqref{eq:tci-2}, $Y_g:=v_g-v$ satisfies the SPDE 
\begin{align*}
dY_g(t) - {\rm div}_x \Big( |\nabla v_g|^{p-2}\nabla v_g-|\nabla v|^{p-2}\nabla v\Big)\,dt= \big(\sigma(v_g)-\sigma(v)\big)\,dW_*(t) + \sigma(v_g)g(t)\,dt
\end{align*}
with zero initial condition.  We use It\^{o} formula to $x\mapsto \|x\|_{L^2(\R^d)}^2$, the Cauchy-Schwartz inequality, Young's inequality, the assumptions \ref{A2}-\ref{A3}, and the inequality \eqref{inq:monotone} to have 
\begin{align}
\|Y_g(t)\|_{L^2(\R^d)}^2 & \le (1 + c_\sigma^2) \int_0^t \|Y_g(s)\|_{L^2(\R^d)}^2\,ds + \int_0^t \|\sigma(v_g)\|_{\mathcal{L}_2(\mathcal{H}, L^2(\R^d))}^2\|g(s)\|_{\mathcal{H}}^2\,ds \notag \\
 & \quad + 2 \sup_{s\in [0,t]} \Big| \int_0^s \langle Y_g(r), \big(\sigma(v_g(r))-\sigma(v(r))\big)\,dW_*(r)\rangle\Big| \notag \\
 & \le (1 + c_\sigma^2) \int_0^t \|Y_g(s)\|_{L^2(\R^d)}^2\,ds +  \bar{\sigma}_b^2\int_0^t \|g(s)\|_{\mathcal{H}}^2\,ds \notag \\
 & \quad + 2 \sup_{s\in [0,t]} \Big| \int_0^s \langle Y_g(r), \big(\sigma(v_g(r))-\sigma(v(r))\big)\,dW_*(r)\rangle\Big|\,, \label{inq:tci-prof-1}
\end{align}
where in the last inequality, we have used  the boundedness assumption of $\sigma$ i.e., \eqref{cond:bound-sigma}. Furthermore, the BDG inequality, Young's inequality and \eqref{inq:lip-sigma} yields
\begin{align}
& 2 \mathbb{E}_*\Big[  \sup_{s\in [0,t]} \Big| \int_0^s \langle Y_g(r), \big(\sigma(v_g(r))-\sigma(v(r))\big)\,dW_*(r)\rangle\Big|] \notag \\
& \le 8 \mathbb{E}_*\Big[ \Big( \int_0^t \|Y_g(r)\|_{L^2(\R^d)}^2 \|\sigma(v_g(r))-\sigma(v(r))\|_{\mathcal{L}_2(\mathcal{H}, L^2(\R^d))}^2\,dr \Big)^\frac{1}{2}\Big] \notag \\
& \le \frac{1}{2} \mathbb{E}_*\Big[ \sup_{s\in [0,t]}\|Y_g(s)\|_{L^2(\R^d)}^2\Big] + 32 \int_0^t \mathbb{E}_*\Big[ \|\sigma(v_g(r))-\sigma(v(r))\|_{\mathcal{L}_2(\mathcal{H}, L^2(\R^d))}^2\,dr \Big] \notag \\
& \le \frac{1}{2} \mathbb{E}_*\Big[ \sup_{s\in [0,t]}\|Y_g(s)\|_{L^2(\R^d)}^2\Big] + 32 c_\sigma^2 \int_0^t
\mathbb{E}_*\Big[\sup_{r\in [0,r]}\|Y_g(r)\|_{L^2(\R^d)}^2\Big]\,dr\,. \label{inq:tci-prof-2}
\end{align}
We combine \eqref{inq:tci-prof-1} and \eqref{inq:tci-prof-2}, and obtain 
\begin{align*}
\mathbb{E}_*\Big[ \sup_{s\in [0,t]}\|Y_g(s)\|_{L^2(\R^d)}^2\Big] \le 2\big(1 + 33\,c_\sigma^2\big)
\int_0^t \mathbb{E}_*\Big[ \sup_{r\in [0,s]}\|Y_g(r)\|_{L^2(\R^d)}^2\,ds\Big] + 2\bar{\sigma}_b^2 \mathbb{E}_*\Big[ \int_0^t 
\|g(s)\|_{\mathcal{H}}^2\,ds\Big]\,.
\end{align*}
An application of Gronwall's lemma then shows that
\begin{align*}
\mathbb{E}_*\Big[ \sup_{s\in [0,T]}\|Y_g(s)\|_{L^2(\R^d)}^2\Big] \le 2\bar{\sigma}_b^2 \exp\big\{2T\big(1 + 33\,c_\sigma^2\big)\big\} \mathbb{E}_*\Big[ \|g\|_{L^2(0,T;\mathcal{H})}^2\Big]\,.
\end{align*}
Thus \eqref{inq:tci-suff-cond} holds for the constant ${\tt C}:=2\bar{\sigma}_b^2 \exp\big\{2T\big(1 + 33\,c_\sigma^2\big)\big\} >0$.
In other words, the law $\mu$ of the solution of \eqref{eq:spde} satisfies the quadratic TCI inequality on $ C([0,T];L^2(\R^d))$ i.e., $\mu \in T^{2}({\tt C})$, where ${\tt C}:=2\bar{\sigma}_b^2 \exp\big\{2T\big(1 + 33\,c_\sigma^2\big)\big\} $. In particular, $\mu$  concentrates as a Borel measure on $C([0,T];L^2(\R^d))$. This completes the proof of Theorem \ref{thm:maintci}. 
%%%%%%%%%%%%%%%%%%%%%%%%%%%%%%%%%%%%%%%%%%%%%%%%%%%%
\subsection{Declarations} The author would like to make the following declaration statements.

\begin{itemize}
\item {\bf Funding:}  This declaration is "not applicable".
\item{\bf Ethical Approval:} This declaration is "not applicable".

\item {\bf Availability of data and materials:}  Data sharing is not applicable to this article as no datasets were generated or analyzed during the current study.
\item{\bf Conflict of interest:} The author has not disclosed any competing interests.
\end{itemize}

\vspace{2cm}

\end{document}